\providecommand{\algorithmname}{Algorithm}
\numberwithin{equation}{section}
\numberwithin{figure}{section}
\theoremstyle{plain}
\newtheorem{thm}{\protect\theoremname}[section]
\theoremstyle{definition}
\newtheorem{defn}[thm]{\protect\definitionname}
\theoremstyle{definition}
\newtheorem{example}[thm]{\protect\examplename}
\theoremstyle{plain}
\newtheorem{lem}[thm]{\protect\lemmaname}
\newenvironment{proof}[1][\protect\proofname]{\par
\normalfont\topsep6\p@\@plus6\p@\relax
\trivlist
\itemindent\parindent
\item[\hskip\labelsep\scshape #1]\ignorespaces
}{%
\endtrivlist\@endpefalse
}
\providecommand{\proofname}{Proof}
\theoremstyle{plain}
\newtheorem{cor}[thm]{\protect\corollaryname}
\theoremstyle{plain}
\newtheorem{prop}[thm]{\protect\propositionname}
\theoremstyle{remark}
\newtheorem{rem}[thm]{\protect\remarkname}
\definecolor{mylinkcolor}{HTML}{0066cc}
\definecolor{mycitecolor}{HTML}{008800}
\definecolor{myurlcolor}{HTML}{0066cc}
\tikzset{node distance=1.5cm}
\tikzset{graph node/.style={circle, fill=gray!30}}
\tikzset{graph edge/.style={thick, >=stealth'}}
\DeclareMathOperator*{\infd}{inf\vphantom{\operator@font p}}
    \def\MR#1{}
  \copyrightinfo{\currentyear}{American Mathematical Society}
\providecommand{\corollaryname}{Corollary}
\providecommand{\definitionname}{Definition}
\providecommand{\examplename}{Example}
\providecommand{\lemmaname}{Lemma}
\providecommand{\propositionname}{Proposition}
\providecommand{\remarkname}{Remark}
\providecommand{\theoremname}{Theorem}
\begin{document}
\newrefformat{subsec}{Section \ref{#1}}
\newrefformat{assu}{Assumption \ref{#1}}
\newrefformat{cor}{Corollary \ref{#1}}
\newrefformat{def}{Definition \ref{#1}}
\newrefformat{exa}{Example \ref{#1}}
\newrefformat{prop}{Proposition \ref{#1}}
\newrefformat{rem}{Remark \ref{#1}}
\newrefformat{alg}{Algorithm \ref{#1}}
\newrefformat{fig}{Figure \ref{#1}}
\newrefformat{app}{Appendix \ref{#1}}

\renewcommand{\qedsymbol}{$\blacksquare$}

\global\long\def\diag{\operatorname{diag}}

\global\long\def\conn{\operatorname{con}}

\global\long\def\graph{\operatorname{graph}}

\global\long\def\walks{\operatorname{walks}}

\global\long\def\head{\operatorname{head}}

\global\long\def\last{\operatorname{last}}

\global\long\def\proj{\operatorname{proj}}

\date{}

\title{A fast and stable test to check if a weakly diagonally dominant matrix
is a nonsingular M-matrix}

\author{Parsiad Azimzadeh\thanks{David R. Cheriton School of Computer Science,
University of Waterloo, Waterloo ON, Canada N2L 3G1 \href{mailto:pazimzad@uwaterloo.ca}{pazimzad@uwaterloo.ca}.}}\maketitle

\begin{abstract}
We present a test for determining if a substochastic matrix is convergent.
By establishing a duality between weakly chained diagonally dominant
(w.c.d.d.) L-matrices and convergent substochastic matrices, we show
that this test can be trivially extended to determine whether a weakly
diagonally dominant (w.d.d.) matrix is a nonsingular M-matrix. The
test's runtime is linear in the order of the input matrix if it is
sparse and quadratic if it is dense. This is a partial strengthening
of the cubic test in {[}J. M. Peña., \emph{A stable test to check
if a matrix is a nonsingular M-matrix}, Math. Comp., 247, 1385\textendash 1392,
2004{]}. As a by-product of our analysis, we prove that a nonsingular
w.d.d. M-matrix is a w.c.d.d. L-matrix, a fact whose converse has
been known since at least 1964. We point out that this strengthens
some recent results on M-matrices in the literature.
\end{abstract}

\section{\label{sec:introduction}Introduction}

The \emph{substochastic matrices} are real matrices with nonnegative
entries and whose row-sums are at most one. We establish two results
relating to this family:
\begin{enumerate}[label=(\emph{\roman*})]
\item \label{enu:results_1}To each substochastic matrix $B$ we associate
a possibly infinite \emph{index of contraction} $\widehat{\conn}B$
and show that for each nonnegative integer $k$, $B^{k}$ is a contraction
in the infinity norm (i.e., $\Vert B^{k}\Vert_{\infty}<1$) if and
only if $k>\widehat{\conn}B$.
\item \label{enu:results_2}We show that the index of contraction of a sparse
(resp. dense) square substochastic matrix is computable in time linear
(resp. quadratic) in the order of the input matrix.
\end{enumerate}
It follows immediately from \ref{enu:results_1} that a square substochastic
matrix is convergent if and only if its index of contraction is finite.

By establishing a duality between \emph{weakly chained diagonally
dominant} (w.c.d.d.) \emph{L-matrices} and convergent substochastic
matrices, we use point \ref{enu:results_2} to obtain a test to determine
whether a weakly diagonally dominant (w.d.d.) matrix is a nonsingular
\emph{M-matrix}. Previous work in this regard is the test in \cite{MR2047092}
to determine if an arbitrary matrix (not necessarily w.d.d.) is a
nonsingular M-matrix, which has a cost asymptotically equivalent to
Gaussian elimination (i.e., cubic in the order of the input matrix).

W.d.d. M-matrices arise naturally from discretizations of differential
operators and appear in the Bellman equation for optimal decision
making on a controlled Markov chain \cite{MR2551155}. As such, these
matrices have attracted a significant amount of attention from the
scientific computing and numerical analysis communities.

W.c.d.d. matrices were first studied in a wonderful work by P. N.
Shivakumar and K. H. Chew \cite{MR0332820} in which they were proven
to be nonsingular (see also \cite{MR3493959} for a short proof).
Various authors have recently studied the family of w.c.d.d. M-matrices,
obtaining bounds on the infinity norm of their inverses (i.e., $\Vert A^{-1}\Vert_{\infty}$)
\cite{MR1384509,MR2350685,MR2433738,MR2535528,MR2577710}. While a
w.c.d.d. matrix is w.d.d. by definition, the converse is not necessarily
true in general (e.g., $\left( \begin{smallmatrix} +1&-1\\ -1&+1 \end{smallmatrix} \right)$
is w.d.d. but not w.c.d.d.).

It has long been known (possibly as early as 1964; see the work of
J. H. Bramble and B. E. Hubbard \cite{MR0162367}) that a w.c.d.d.
L-matrix\footnote{In \cite{MR0162367}, the authors refer to w.c.d.d. L-matrices as
\emph{matrices of positive type}.} is a nonsingular w.d.d. M-matrix. We obtain a proof of the converse
as a by-product of our analysis. In particular, we establish that\footnote{\eqref{eq:characterization} remains true if we replace ``L-matrix''
by ``\emph{Z-matrix} with nonnegative diagonal entries''.}
\begin{align}
A\text{ is a nonsingular w.d.d. M-matrix} & \iff A\text{ is a nonsingular w.d.d. L-matrix}\nonumber \\
 & \iff A\text{ is a w.c.d.d. L-matrix}.\label{eq:characterization}
\end{align}
\eqref{eq:characterization} immediately strengthens the results pertaining
to norms of inverses listed in the previous paragraph, ensuring they
apply more generally to nonsingular w.d.d. M-matrices. \eqref{eq:characterization}
is also useful in that it gives a graph-theoretic characterization
of nonsingular w.d.d. M-matrices by means of w.c.d.d. matrices. This
characterization is often easier to use than the usual characterizations
involving, say, inverse-positivity or positive principal minors \cite{MR0444681}.

We list a few other interesting recent results concerning w.c.d.d.
matrices and M-matrices here: \cite{MR2653540,MR3175363,MR3149442,MR3253625,MR3357017,li2016subdirect,MR3579726,MR3486018}.

\prettyref{sec:matrix_families} introduces and establishes results
on substochastic matrices, M-matrices, and w.c.d.d. matrices. \prettyref{sec:computation}
gives the procedure to compute the index of contraction. \prettyref{sec:experiments}
presents numerical experiments testing the efficacy of the procedure
on randomly sampled matrices.

\section{\label{sec:matrix_families}Matrix families}

\subsection{Substochastic matrices}
\begin{defn}
A substochastic matrix is a real matrix $B\coloneqq(b_{ij})$ with
nonnegative entries (i.e., $b_{ij}\geq0$) and row-sums at most one
(i.e., $\sum_{j}b_{ij}\leq1$). A stochastic (a.k.a. Markov) matrix
is a substochastic matrix whose row-sums are exactly one.
\end{defn}
Note that in our definition above, we do not require $B$ to be square.
\begin{defn}
Let $A\coloneqq(a_{ij})$ be an $m\times n$ complex matrix.
\begin{enumerate}[label=(\emph{\roman*})]
\item The digraph of $A$, denoted $\graph A$, is defined as follows:
\begin{enumerate}
\item If $A$ is square, $\graph A$ is a tuple $(V,E)$ consisting of the
vertex set $V\coloneqq\{1,\ldots,m\}$ and edge set $E\subset V\times V$
satisfying $(i,j)\in E$ if and only if $a_{ij}\neq0$.
\item If $A$ is not square, $\graph A\coloneqq\graph A^{\prime}$ where
$A^{\prime}$ is the smallest square matrix obtained by appending
rows or columns of zeros to $A$.
\end{enumerate}
\item A walk in $\graph A\equiv(V,E)$ is a nonempty finite sequence of
edges $(i_{1},i_{2})$, $(i_{2},i_{3})$, $\ldots$, $(i_{\ell-1},i_{\ell})$
in $E$. The set of all walks in $\graph A$ is denoted $\walks A$.
\item Let $p\in\walks A$. The length of $p$, denoted $|p|$, is the total
number of edges in $p$. $\head p$ (resp. $\last p$) is the first
(resp. last) vertex in $p$.
\end{enumerate}
\end{defn}
To simplify matters, we hereafter denote edges by $i\rightarrow j$
instead of $(i,j)$ and walks by $i_{1}\rightarrow i_{2}\rightarrow\cdots\rightarrow i_{\ell}$
instead of $(i_{1},i_{2})$, $(i_{2},i_{3})$, $\ldots$, $(i_{\ell-1},i_{\ell})$.
We use the terms ``row'' and ``vertex'' interchangeably.

\begin{figure}
\begin{centering}
\subfloat[The matrix $B$]{\begin{centering}
$\begin{pmatrix}0\\
1 & 0\\
 & 1 & 0\\
 &  & \ddots & \ddots\\
 &  &  & 1 & 0
\end{pmatrix}$
\par\end{centering}
}\hfill{}\subfloat[$\protect\graph B$ (vertices in $\hat{J}(B)$ are \colorbox{mylinkcolor}{\textcolor{white}{highlighted}})]{\begin{tikzpicture}[node distance=2cm]
  \node [graph node, fill=mylinkcolor, text=white] (1) {1};
  \node [graph node, right of=1] (2) {2};
  \node [right of=2] (dots) {$\cdots$};
  \node [graph node, right of=dots] (n) {$n$};
  \draw[graph edge, ->] (2) to (1);
  \draw[graph edge, ->] (dots) to (2);
  \draw[graph edge, ->] (n) to (dots);
\end{tikzpicture}

}
\par\end{centering}
\caption{\label{fig:submarkov_example}An example of an $n\times n$ substochastic
matrix and its graph}
\end{figure}
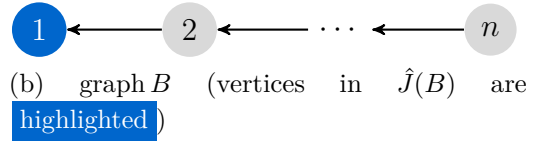

Let $B\coloneqq(b_{ij})$ be an $m\times n$ substochastic matrix.
We define the sets
\begin{align*}
\hat{J}(B) & \coloneqq\left\{ 1\leq i\leq m\colon{\textstyle \sum_{j}}b_{ij}<1\right\} \\
\text{and }\hat{P}_{i}(B) & \coloneqq\Bigl\{ p\in\walks B\colon\head p=i\text{ and }\last p\in\hat{J}(B)\Bigr\}.
\end{align*}
It is understood that when we write $i\notin\hat{J}(B)$, we mean
$i\in\hat{J}(B)^{\complement}\coloneqq\{1,\ldots,m\}\setminus\hat{J}(B)$.
Note that if $\hat{J}(B)$ is empty, so too is $\hat{P}_{i}(B)$ for
each $i$. We define the index of contraction associated with $B$
by
\begin{equation}
\widehat{\conn}B\coloneqq\max\left(0,\sup_{i\notin\hat{J}(B)}\left\{ \infd_{p\in\hat{P}_{i}(B)}\left|p\right|\right\} \right)\label{eq:index_of_connectivity}
\end{equation}
subject to the conventions $\inf\emptyset=\infty$ and $\sup\emptyset=-\infty$.
We will see shortly that the matrix $B$ is convergent if and only
if $\widehat{\conn}B$ is finite.
\begin{example}
The $n\times n$ matrix $B$ in \prettyref{fig:submarkov_example}
satisfies $\hat{J}(B)=\{1\}$ and
\[
\min_{p\in\hat{P}_{i}(B)}\left|p\right|=i-1\text{ for }i\notin\hat{J}(B).
\]
It follows that $\widehat{\conn}B=n-1$.
\end{example}
An immediate consequence of the definition of the index of contraction
is below.
\begin{lem}
\label{lem:bounded_or_infinite}Let $B$ be an $m\times n$ substochastic
matrix. If $m\leq n$ (resp. $m>n$) $\widehat{\conn}B$ is either
infinite or strictly less than $m$ (resp. $n+1$).
\end{lem}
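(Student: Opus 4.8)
The plan is to use the fact that, when $\widehat{\conn}B$ is finite, it equals the length of a shortest walk from some vertex outside $\hat J(B)$ to a vertex inside $\hat J(B)$, combined with the observation that a shortest such walk cannot repeat a vertex.

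First I would dispose of the degenerate cases: if $\hat J(B)^{\complement}$ is empty then $\widehat{\conn}B=0$, which is already strictly smaller than $m$ (resp. $n+1$), so assume $\widehat{\conn}B$ is finite and $\hat J(B)^{\complement}\neq\emptyset$. Then $\sup_{i\notin\hat J(B)}\infd_{p\in\hat P_i(B)}|p|$ is finite; in particular, for each $i\notin\hat J(B)$ the set $\hat P_i(B)$ is nonempty and, since walk lengths are positive integers, $\infd_{p\in\hat P_i(B)}|p|$ is attained by some walk $p_i$ of minimal length.

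Next I would show that $p_i$ visits no vertex twice. If $p_i=i_1\rightarrow\cdots\rightarrow i_\ell$ had $i_a=i_b$ with $a<b$, then $i_1\rightarrow\cdots\rightarrow i_a\rightarrow i_{b+1}\rightarrow\cdots\rightarrow i_\ell$ is a strictly shorter walk which still has head $i\notin\hat J(B)$ and last vertex $\last p_i\in\hat J(B)$, hence still belongs to $\hat P_i(B)$ — contradicting minimality of $|p_i|$. Thus the vertices of $p_i$ are pairwise distinct, so $|p_i|$ equals the number of distinct vertices of $p_i$ minus one.

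Finally I would bound that vertex count using the padding convention defining $\graph B$. Recall $\graph B=\graph B'$, where $B'$ is $B$ padded with $n-m$ zero rows if $m\leq n$ and with $m-n$ zero columns if $m>n$. The endpoints of $p_i$ both lie in $\{1,\dots,m\}$ (we have $\head p_i=i$ and $\last p_i\in\hat J(B)\subseteq\{1,\dots,m\}$), every non-initial vertex of $p_i$ has an incoming edge in $\graph B$, and every non-final vertex has an outgoing edge. If $m\leq n$, the vertices $m+1,\dots,n$ index zero rows of $B'$, so they have no outgoing edge; since the last vertex of $p_i$ already lies in $\{1,\dots,m\}$, so do all the others, whence $|p_i|\leq m-1<m$. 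If $m>n$, the vertices $n+1,\dots,m$ index zero columns of $B'$, so they have no incoming edge; hence every vertex of $p_i$ other than the first lies in $\{1,\dots,n\}$, so $p_i$ has at most $n+1$ distinct vertices and $|p_i|\leq n<n+1$. Taking the supremum over the finitely many $i\notin\hat J(B)$, and the (harmless) maximum with $0$, gives $\widehat{\conn}B<m$ (resp. $<n+1$). The only delicate points are checking in the excision step that the shortened walk stays in $\hat P_i(B)$, and keeping straight the asymmetry between padding with zero rows and with zero columns, which is exactly what produces the two different bounds.
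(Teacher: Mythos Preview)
Your proof is correct and follows essentially the same approach as the paper: both arguments bound the length of a shortest walk in $\hat P_i(B)$ by showing that any repeated vertex can be excised via cycle-removal, then use that all relevant vertices lie in $\{1,\ldots,m\}$ (resp.\ all non-initial vertices lie in $\{1,\ldots,n\}$) because the padded rows (resp.\ columns) of $B'$ are zero. The only cosmetic difference is that the paper starts from an arbitrary walk and iterates cycle-removal, whereas you start from a minimal walk and use cycle-removal once as a contradiction; you also spell out the $m>n$ case that the paper leaves as ``handled similarly''.
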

\begin{proof}
Suppose $m\leq n$. Let $i_{1}\notin\hat{J}(B)$ and $p\coloneqq i_{1}\rightarrow\cdots\rightarrow i_{\ell}$
be a walk in $\hat{P}_{i_{1}}(B)$. Since $i_{\ell}\in\hat{J}(B)$,
it follows that $1\leq i_{\ell}\leq m$. This implies that $1\leq i_{k}\leq m$
for all $k$ since by definition, $\graph B$ has no edges of the
form $i\rightarrow j$ where $i>m$. Now, suppose $|p|\geq m$. By
the pigeonhole principle, we can find integers $u$ and $v$ such
that $1\leq u<v\leq\ell$ and $i_{u}=i_{v}$. That is, the walk $p$
contains a cycle (i.e., a subwalk starting and ending at the same
vertex). ``Removing'' the cycle yields the new walk
\[
p^{\prime}\coloneqq i_{1}\rightarrow i_{2}\rightarrow\cdots\rightarrow i_{u}\rightarrow i_{v+1}\rightarrow i_{v+2}\rightarrow\cdots\rightarrow i_{\ell}
\]
in $\hat{P}_{i_{1}}(B)$ satisfying $|p^{\prime}|<|p|$. If $|p^{\prime}|\geq m$,
we can continue removing cycles until we arrive at a walk $p^{\prime\prime}\in\hat{P}_{i_{1}}(B)$
satisfying $|p^{\prime\prime}|<m$. 

The case of $m>n$ is handled similarly.
\end{proof}
We are now ready to present our main result related to substochastic
matrices. In the statement below, it is understood that if $B$ is
a square matrix, $B^{0}=I$.
\begin{thm}
\label{thm:eventual_contractivity}Let $B$ be a square substochastic
matrix. If $\alpha\coloneqq\widehat{\conn}B$ is finite,
\[
1=\Vert B^{0}\Vert_{\infty}=\cdots=\Vert B^{\alpha}\Vert_{\infty}>\Vert B^{\alpha+1}\Vert_{\infty}\geq\Vert B^{\alpha+2}\Vert_{\infty}\geq\cdots
\]
Otherwise,
\[
1=\Vert B^{1}\Vert_{\infty}=\Vert B^{2}\Vert_{\infty}=\cdots
\]
\end{thm}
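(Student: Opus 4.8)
The plan is to analyze the entries of $B^k$ directly in terms of weighted walks in $\graph B$. Write $B = (b_{ij})$ and $B^k = (b^{(k)}_{ij})$. Since the row-sums of $B$ are at most one, a routine induction shows every row-sum of $B^k$ is at most one, so $\Vert B^k\Vert_\infty \le 1$ for all $k$; the only question is when strict inequality holds in some row. The key identity is
\[
b^{(k)}_{ij} = \sum_{i = i_0 \to i_1 \to \cdots \to i_k = j} b_{i_0 i_1}\cdots b_{i_{k-1}i_k},
\]
a sum of nonnegative terms over walks of length exactly $k$ from $i$ to $j$. I would introduce the "deficiency" $d_i \coloneqq 1 - \sum_j b_{ij} \ge 0$, which is strictly positive exactly when $i \in \hat J(B)$, and compute the $i$-th row-sum deficiency of $B^k$, namely $1 - \sum_j b^{(k)}_{ij}$. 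Telescoping the row-sum recursion gives the clean formula
\[
1 - {\textstyle\sum_j} b^{(k)}_{ij} \;=\; \sum_{r=0}^{k-1}\ \sum_{i = i_0 \to \cdots \to i_r}\ b_{i_0 i_1}\cdots b_{i_{r-1}i_r}\, d_{i_r},
\]
i.e. the "escape probability within $k$ steps" starting from $i$. This quantity is nondecreasing in $k$ because we are adding more nonnegative terms, which immediately yields the monotonicity $\Vert B^{k}\Vert_\infty \ge \Vert B^{k+1}\Vert_\infty$ once it drops below $1$; and it is $>0$ for some starting row $i$ at step $k$ if and only if there is a walk of length $\le k-1$ from $i$ to some vertex in $\hat J(B)$ carrying positive weight — equivalently, by nonnegativity of the entries, a walk in $\hat P_i(B)$ of length $\le k-1$.

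From here I would argue the two cases of the theorem. The infinity norm $\Vert B^k\Vert_\infty$ equals $1$ iff there is some row $i$ whose $k$-step escape quantity is zero, i.e. \emph{no} walk in $\hat P_i(B)$ has length $\le k-1$. Recalling
\[
\widehat{\conn}B = \max\Bigl(0,\ \sup_{i\notin \hat J(B)}\ \infd_{p\in\hat P_i(B)}|p|\Bigr),
\]
observe that rows $i \in \hat J(B)$ have $d_i>0$ so their escape quantity is positive already at $k=1$; the obstruction to $\Vert B^k\Vert_\infty<1$ can only come from rows $i\notin\hat J(B)$. For such a row, "some walk in $\hat P_i(B)$ has length $\le k-1$" holds for every $i\notin\hat J(B)$ precisely when $k-1 \ge \infd_{p\in\hat P_i(B)}|p|$ for all such $i$, i.e. when $k > \widehat{\conn}B$. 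This gives: if $\alpha \coloneqq \widehat{\conn}B$ is finite, then $\Vert B^k\Vert_\infty = 1$ for $0\le k\le \alpha$ and $\Vert B^k\Vert_\infty < 1$ for $k \ge \alpha+1$; combined with the monotonicity from the previous paragraph this is exactly the displayed chain of (in)equalities. (The $k=0$ case, $B^0=I$, is handled by the convention $\widehat{\conn}B\ge 0$.) If instead $\widehat{\conn}B = \infty$, then by definition there is some $i\notin\hat J(B)$ with $\hat P_i(B)=\emptyset$ (the infimum is $\infty$ only via $\inf\emptyset$), so that row never escapes and its row-sum in $B^k$ stays $1$ for all $k\ge 1$, giving $\Vert B^k\Vert_\infty = 1$ for all $k\ge 1$.

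The main subtlety — and the step I would be most careful about — is the "if and only if" linking a \emph{positive-weight} walk to the mere \emph{existence} of a walk in $\hat P_i(B)$: this uses crucially that $B$ has nonnegative entries, so that a sum of products of entries over a set of walks is positive exactly when that set is nonempty and each edge on at least one of those walks is present in $\graph B$ (which is how $\graph B$ was defined). One must also be slightly careful that $\hat P_i(B)$ could be empty either because $\hat J(B)=\emptyset$ or because no walk from $i$ reaches $\hat J(B)$ — in both situations the infimum convention correctly forces $\widehat{\conn}B=\infty$, consistent with the second case. Everything else is bookkeeping with the telescoping identity and the nonnegativity of terms; no estimate is needed beyond "a sum of nonnegative reals is positive iff some term is positive."
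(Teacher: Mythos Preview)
Your proof is correct and takes a route genuinely different from the paper's. The paper factors the argument through two auxiliary lemmata: one establishing how $\hat{J}(\cdot)$ and $\graph(\cdot)$ behave under a product $BC$ of substochastic matrices, and another using those facts inductively to show that, for $i\notin\hat{J}(B)$, one has $i\in\hat{J}(B^k)$ if and only if some walk in $\hat{P}_i(B)$ has length less than $k$; the theorem then follows by combining this with the trivial observation $\Vert M\Vert_\infty<1\iff\hat{J}(M)=\{1,\ldots,n\}$, together with submultiplicativity of the norm for the monotonicity $\Vert B^{k+1}\Vert_\infty\le\Vert B^k\Vert_\infty$. You instead expand the row-sum deficiency $1-\sum_j b^{(k)}_{ij}$ explicitly as a telescoping sum of weighted walks terminating in $\hat{J}(B)$, which delivers the same ``iff'' in one stroke and yields the monotonicity of $k\mapsto\Vert B^k\Vert_\infty$ as a by-product of the very same identity (each row's deficiency is nondecreasing in $k$). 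Your approach is more self-contained and carries a clean probabilistic reading (the deficiency is the escape-by-time-$k$ probability for a killed chain); the paper's modular approach has the compensating advantage that its intermediate lemmata are reusable, for instance in the appendix where the theorem is generalized to products $B_1B_2\cdots B_n$ of distinct substochastic matrices.
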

Before giving a proof, it is useful to record some consequences of
the above.
\begin{cor}
\label{cor:eventual_contractivity}Let $B$ be a square substochastic
matrix. Then, its spectral radius is no larger than one. Moreover,
the following statements are equivalent:
\begin{enumerate}[label=(\roman*), ref=(\emph{\roman*})]
\item \label{enu:eventual_contractivity_1}$\widehat{\conn}B$ is finite.
\item \label{enu:eventual_contractivity_2}$B$ is convergent.
\item \label{enu:eventual_contractivity_3}$I-B$ is nonsingular.
\end{enumerate}
\end{cor}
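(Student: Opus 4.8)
The plan is to let Theorem~\ref{thm:eventual_contractivity} do all of the real work and then assemble the rest from standard facts. First, the spectral radius bound: since $B$ is substochastic its rows are nonnegative with sum at most one, so $\Vert B\Vert_{\infty}=\max_{i}\sum_{j}|b_{ij}|\le1$, and as the spectral radius is dominated by any (submultiplicative) matrix norm, $\rho(B)\le\Vert B\Vert_{\infty}\le1$.

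For \ref{enu:eventual_contractivity_1}$\iff$\ref{enu:eventual_contractivity_2}, suppose first that $\alpha\coloneqq\widehat{\conn}B$ is finite. Theorem~\ref{thm:eventual_contractivity} gives $\beta\coloneqq\Vert B^{\alpha+1}\Vert_{\infty}<1$ together with $\Vert B^{\alpha}\Vert_{\infty}\ge\Vert B^{\alpha+1}\Vert_{\infty}\ge\Vert B^{\alpha+2}\Vert_{\infty}\ge\cdots$, so the sequence $\bigl(\Vert B^{k}\Vert_{\infty}\bigr)_{k\ge\alpha}$ is non-increasing; since submultiplicativity of $\Vert\cdot\Vert_{\infty}$ gives $\Vert B^{n(\alpha+1)}\Vert_{\infty}\le\beta^{n}\to0$, this eventually-monotone sequence has a subsequence tending to $0$ and therefore tends to $0$, i.e.\ $B$ is convergent. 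Conversely, if $\widehat{\conn}B$ is infinite then Theorem~\ref{thm:eventual_contractivity} gives $\Vert B^{k}\Vert_{\infty}=1$ for all $k\ge1$, so $B^{k}\not\to0$ and $B$ is not convergent.

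It remains to fold \ref{enu:eventual_contractivity_3} into this equivalence. The implication \ref{enu:eventual_contractivity_2}$\implies$\ref{enu:eventual_contractivity_3} is immediate: if $(I-B)x=0$ then $x=Bx=\cdots=B^{k}x$ for every $k$, and letting $k\to\infty$ forces $x=0$, so $I-B$ is injective, hence nonsingular. For \ref{enu:eventual_contractivity_3}$\implies$\ref{enu:eventual_contractivity_1} I would argue by contraposition, reading off the combinatorial meaning of an infinite index of contraction. If $\widehat{\conn}B$ is infinite, then because the supremum in \eqref{eq:index_of_connectivity} ranges over the finite set $\hat{J}(B)^{\complement}$ and a nonempty set of walk-lengths has a finite infimum, there is some $i_{0}\notin\hat{J}(B)$ with $\hat{P}_{i_{0}}(B)=\emptyset$; that is, no walk from $i_{0}$ reaches $\hat{J}(B)$. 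Let $S$ consist of $i_{0}$ together with every vertex reachable from $i_{0}$ in $\graph B$. Then no edge of $\graph B$ leaves $S$, and $S\cap\hat{J}(B)=\emptyset$ (a witnessing walk would otherwise lie in $\hat{P}_{i_{0}}(B)$), so after a simultaneous reordering of rows and columns listing the vertices of $S$ first, $B$ becomes block lower triangular with leading diagonal block the principal submatrix $B_{SS}$, which has all row-sums equal to one. Hence $1$ is an eigenvalue of $B_{SS}$ (the all-ones vector is an eigenvector), and therefore of $B$, so $I-B$ is singular. Alternatively one can short-circuit this step with Perron--Frobenius: $\rho(B)$ is an eigenvalue of the nonnegative matrix $B$, so if $I-B$ is nonsingular then $\rho(B)\ne1$, whence $\rho(B)<1$ by the first paragraph and $B$ is convergent.

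All of the substantive analysis sits inside Theorem~\ref{thm:eventual_contractivity}; the only step that is not purely formal is \ref{enu:eventual_contractivity_3}$\implies$\ref{enu:eventual_contractivity_1}, which requires injecting a nonnegativity fact — the block-triangular/stochastic-submatrix observation above, or Perron--Frobenius — to preclude $B$ having $1$ as an eigenvalue while its powers still decay in norm. I expect that to be the only place needing any care.
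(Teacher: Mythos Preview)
Your proof is correct and follows the paper's overall structure: the spectral radius bound and the implications \ref{enu:eventual_contractivity_1}$\implies$\ref{enu:eventual_contractivity_2}$\implies$\ref{enu:eventual_contractivity_3} are handled identically (the paper simply cites Theorem~\ref{thm:eventual_contractivity} and the fact that \ref{enu:eventual_contractivity_2}$\implies$\ref{enu:eventual_contractivity_3} holds for any matrix), and for \ref{enu:eventual_contractivity_3}$\implies$\ref{enu:eventual_contractivity_1} both argue by contrapositive, isolating a set of vertices that is closed under outgoing edges and disjoint from $\hat{J}(B)$. The one noteworthy difference is how that contrapositive is finished. The paper takes as its set the \emph{full} collection $R=\{i\notin\hat{J}(B):\hat{P}_i(B)=\emptyset\}$, then explicitly builds a kernel vector $\left(\begin{smallmatrix}e\\x\end{smallmatrix}\right)$ of $I-B$ by applying the already-proved implication \ref{enu:eventual_contractivity_1}$\implies$\ref{enu:eventual_contractivity_3} to the complementary diagonal block $B_3$ so as to solve $(I-B_3)x=B_2 e$. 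You instead take the reachable set $S$ from a single bad vertex and observe that block triangularity passes the eigenvalue $1$ of the stochastic block $B_{SS}$ up to $B$; this is slightly cleaner, since it avoids the self-reference to the partially established corollary. Your Perron--Frobenius alternative is shorter still, at the cost of importing a heavier theorem.
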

The above can be considered a generalization of the well-known result
that a square stochastic (a.k.a. Markov) matrix has spectral radius
no larger than one and at least one eigenvalue equal exactly to one
 (recall that for any matrix $M$, $I-M$ is singular if and only
if $\lambda=1$ is an eigenvalue of $M$).
\begin{proof}
The claim that the spectral radius of $B$ is no larger than one in
magnitude is a direct consequence of the fact that $\Vert B\Vert_{\infty}\leq1$.

\ref{enu:eventual_contractivity_1} $\implies$ \ref{enu:eventual_contractivity_2}
follows immediately from \prettyref{thm:eventual_contractivity},
while \ref{enu:eventual_contractivity_2} $\implies$ \ref{enu:eventual_contractivity_3}
is true for any matrix. We prove below, by contrapositive, the claim
\ref{enu:eventual_contractivity_3} $\implies$ \ref{enu:eventual_contractivity_1}.

Suppose $\widehat{\conn}B$ is infinite. Let $R$ be the set of rows
$i\notin\hat{J}(B)$ for which $\hat{P}_{i}(B)$ is empty. Due to
our assumptions, there is at least one such row and hence $R$ is
nonempty. Without loss of generality, we may assume $R=\{1,\ldots,r\}$
for some $1\leq r\leq n$ where $n$ is the order of $B$ (otherwise,
replace $B$ by $PBP^{\intercal}$ where $P$ is an appropriately
chosen permutation matrix). Let $e\in\mathbb{R}^{r}$ be the column
vector whose entries are all one. If $r=n$, each row-sum of $B$
is one (i.e., $Be=e$ so that $(I-B)e=0$). Otherwise, $B$ has the
block structure\[
B = \left( \begin{array}{c|c}
	B_1 & 0 \\
	\hline
	B_2 & B_3
\end{array} \right)
\text{ where } B_1 \in \mathbb{R}^{r\times r}.
\]The partition above ensures that for each row $i\notin R$, $i\in\hat{J}(B)$
or $\hat{P}_{i}(B)$ is nonempty. Therefore, $\widehat{\conn}B_{3}$
is finite, and hence the linear system $(I-B_{3})x=B_{2}e$ has a
unique solution $x$. Moreover, since the row-sums of $B_{1}$ are
one, $B_{1}e=e$. Therefore,
\[
(I-B)\begin{pmatrix}e\\
x
\end{pmatrix}=\begin{pmatrix}e\\
x
\end{pmatrix}-\begin{pmatrix}B_{1}e\\
B_{2}e+B_{3}x
\end{pmatrix}=\begin{pmatrix}e\\
x
\end{pmatrix}-\begin{pmatrix}e\\
x
\end{pmatrix}=0.\qedhere
\]
\end{proof}
\begin{cor}
\label{cor:irreducibly_substochastic}A square irreducible substochastic
matrix $B$ is convergent if and only if $\hat{J}(B)$ is nonempty.
\end{cor}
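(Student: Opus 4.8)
The plan is to lean entirely on \prettyref{cor:eventual_contractivity}, which already establishes that a square substochastic matrix is convergent if and only if its index of contraction is finite. Thus it suffices to prove that, for an irreducible substochastic $B$, the quantity $\widehat{\conn}B$ defined in \eqref{eq:index_of_connectivity} is finite exactly when $\hat{J}(B)$ is nonempty.

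One implication needs no irreducibility at all: if $\hat{J}(B)=\emptyset$ then, as already noted in the text, $\hat{P}_i(B)=\emptyset$ for every row $i$, so $\infd_{p\in\hat{P}_i(B)}|p|=\inf\emptyset=\infty$; since $B$ has at least one row the supremum in \eqref{eq:index_of_connectivity} ranges over a nonempty set, hence equals $\infty$, and therefore $\widehat{\conn}B=\infty$, so $B$ is not convergent.

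For the other implication I would fix a vertex $j\in\hat{J}(B)$ and take an arbitrary row $i\notin\hat{J}(B)$, noting that necessarily $i\neq j$. Invoking the standard equivalence between irreducibility of $B$ and strong connectivity of $\graph B$, there is a walk $p$ from $i$ to $j$; since $\head p=i$ and $\last p=j\in\hat{J}(B)$, we have $p\in\hat{P}_i(B)$, so $\hat{P}_i(B)$ is nonempty and $\infd_{p\in\hat{P}_i(B)}|p|$ is finite. Because the vertex set is finite, the supremum over all $i\notin\hat{J}(B)$ of these quantities is a finite maximum, so $\widehat{\conn}B<\infty$, and \prettyref{cor:eventual_contractivity} then yields convergence.

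I do not expect a genuine obstacle. The only points requiring care are the invocation that ``$B$ irreducible'' is synonymous with ``$\graph B$ strongly connected'', and the remark that when $\hat{J}(B)=\emptyset$ the supremum in \eqref{eq:index_of_connectivity} is taken over a nonempty index set, so that it genuinely evaluates to $\infty$ rather than to $\sup\emptyset=-\infty$.
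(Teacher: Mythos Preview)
Your proposal is correct and follows essentially the same route as the paper: the paper's proof is a two-sentence version of exactly your argument, invoking the equivalence of irreducibility with strong connectivity of $\graph B$ to conclude that $\widehat{\conn}B$ is finite if and only if $\hat{J}(B)$ is nonempty, and then applying \prettyref{cor:eventual_contractivity}. Your write-up simply makes the two implications explicit where the paper leaves them as an exercise.
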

The above result is well-known. It can be obtained, for example, by
\cite[Corollary 1.19 and Lemma 2.8]{MR1753713}. We give a short alternate
proof using \prettyref{cor:eventual_contractivity}:
\begin{proof}
Since a square matrix is irreducible if and only if its digraph is
strongly connected \cite{MR1753713}, $\widehat{\conn}B$ is finite
if and only if $\hat{J}(B)$ is nonempty. The result now follows from
\prettyref{cor:eventual_contractivity}.
\end{proof}
If $B$ is a square substochastic matrix, we can always find a permutation
matrix $P$ and an integer $r\geq1$ such that $PBP^{\intercal}$
has the block triangular structure 
\begin{equation}
PBP^{\intercal}=\begin{pmatrix}B_{11} & B_{12} & \cdots & B_{1r}\\
 & B_{22} & \cdots & B_{2r}\\
 &  & \ddots & \vdots\\
 &  &  & B_{rr}
\end{pmatrix}\label{eq:normal_form}
\end{equation}
where each $B_{ii}$ is a square substochastic matrix that is either
irreducible or a $1\times1$ zero matrix (it is understood that if
$r=1$, then $B=B_{11}$). Following \cite{MR0107648,MR1753713},
we refer to this as the \emph{normal form} of $B$ (it is shown in
\cite[Pg. 90]{MR0107648} that the normal form of a matrix is unique
up to permutations by blocks). Since $\det(PBP^{\intercal}-\lambda I)=\prod_{i}\det(B_{ii}-\lambda I)$,
the spectrum of $B$ satisfies
\begin{equation}
\sigma(B)=\sigma(B_{11})\cup\cdots\cup\sigma(B_{rr}).\label{eq:spectrum}
\end{equation}
This observation motivates the next result.
\begin{thm}
Let $B$ be a square substochastic matrix with normal form \eqref{eq:normal_form}.
$B$ is convergent if and only if $\hat{J}(B_{ii})$ is nonempty for
each $i$. Moreover, if $B$ is convergent,
\begin{equation}
\max_{i}\left\{ \widehat{\conn}B_{ii}\right\} \leq\widehat{\conn}B\leq N+\widehat{\conn}B_{rr}\label{eq:conn_inequality}
\end{equation}
where $N\coloneqq\sum_{i=1}^{r-1}n_{i}$ and $n_{i}$ is the order
of the matrix $B_{ii}$ (it is understood that if $r=1$, then $N=0$).
\end{thm}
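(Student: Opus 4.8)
The plan is to treat the two assertions in turn, after a preliminary normalisation. First I would note that replacing $B$ by $PBP^{\intercal}$ changes neither $\widehat{\conn}B$ (a simultaneous row/column permutation is a digraph isomorphism) nor the (non)singularity of $I-B$, so one may assume $B$ itself equals the block upper triangular matrix on the right of \eqref{eq:normal_form}. The convergence criterion then follows by combining \eqref{eq:spectrum} with \prettyref{cor:eventual_contractivity}: $B$ is convergent iff $\rho(B_{ii})<1$ for every $i$, i.e. iff every $B_{ii}$ is convergent; and a $1\times1$ zero block is convergent with $\hat{J}(B_{ii})$ trivially nonempty, while an irreducible $B_{ii}$ is convergent iff $\hat{J}(B_{ii})\neq\emptyset$ by \prettyref{cor:irreducibly_substochastic}. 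From now on assume $B$ convergent, so that (again by \eqref{eq:spectrum}) every $\widehat{\conn}B_{ii}$ is finite.

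For \eqref{eq:conn_inequality} I would first record three structural facts, writing $V_i$ for the vertex set of the $i$th diagonal block; each is immediate from $B$ being substochastic and block upper triangular. (i) $\graph B$ has no edge from $V_j$ into $V_i$ when $i<j$, so a walk that ever enters $V_r$ stays in $V_r$, and the subdigraph of $\graph B$ induced on $V_i$ is $\graph B_{ii}$. (ii) For $w\in V_i$, $w\in\hat{J}(B)$ implies $w\in\hat{J}(B_{ii})$, because the $B$-row-sum at $w$ dominates its $B_{ii}$-row-sum; for the last block these row-sums coincide, so $\hat{J}(B)\cap V_r=\hat{J}(B_{rr})$. (iii) If $w\in V_i\setminus\hat{J}(B_{ii})$, then the $B_{ii}$-row-sum at $w$ equals $1$, forcing all entries of row $w$ of $B$ outside block-column $i$ to vanish, so every edge of $\graph B$ leaving $w$ remains in $V_i$.

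For the left inequality I would fix $i$ and show $\widehat{\conn}B\ge\widehat{\conn}B_{ii}$; this is vacuous when $\widehat{\conn}B_{ii}=0$, so set $\alpha\coloneqq\widehat{\conn}B_{ii}\ge1$, which, being a finite supremum of positive integers, is attained at some $v\in V_i\setminus\hat{J}(B_{ii})$ with $\min_{p\in\hat{P}_v(B_{ii})}|p|=\alpha$; note $v\notin\hat{J}(B)$ by (ii). Given any walk $p=v=i_1\to\cdots\to i_\ell$ in $\hat{P}_v(B)$, I would let $k$ be the least index with $i_k\notin V_i$ or $i_k\in\hat{J}(B_{ii})$ — it exists since $i_\ell\in\hat{J}(B)$ puts $i_\ell\notin V_i\setminus\hat{J}(B_{ii})$ by (ii) — then apply (iii) repeatedly along $i_1,\dots,i_{k-1}\in V_i\setminus\hat{J}(B_{ii})$ to deduce $i_1,\dots,i_k\in V_i$ and hence $i_k\in\hat{J}(B_{ii})$, and invoke (i) to see the prefix $i_1\to\cdots\to i_k$ is a walk in $\graph B_{ii}$ lying in $\hat{P}_v(B_{ii})$. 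This yields $|p|\ge k-1\ge\alpha$, hence $\widehat{\conn}B\ge\inf_{p\in\hat{P}_v(B)}|p|\ge\alpha$; maximising over $i$ completes the lower bound.

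For the right inequality I would fix $v\notin\hat{J}(B)$ and take a shortest walk $p=v=i_1\to\cdots\to i_\ell$ in $\hat{P}_v(B)$ (it exists since $\widehat{\conn}B$ is finite); deleting cycles as in the proof of Lemma~\ref{lem:bounded_or_infinite} together with minimality makes the $i_j$ pairwise distinct with $i_1,\dots,i_{\ell-1}\notin\hat{J}(B)$. If $p$ never meets $V_r$, then $\ell-1\le N$ since there are only $N$ vertices outside $V_r$. Otherwise let $t$ be the first index with $i_t\in V_r$; by (i) the tail $i_t,\dots,i_\ell$ stays in $V_r$, while $i_1,\dots,i_{t-1}$ are distinct and outside $V_r$, so $t-1\le N$; and if $t<\ell$ then $i_t\notin\hat{J}(B)$, so $i_t\notin\hat{J}(B_{rr})$ and $i_\ell\in\hat{J}(B_{rr})$ by (ii), the tail lies in $\hat{P}_{i_t}(B_{rr})$, and minimality of $p$ — otherwise splice in a shorter tail, which is legitimate by (i) — forces the tail to be a shortest element of $\hat{P}_{i_t}(B_{rr})$, of length at most $\widehat{\conn}B_{rr}$; in every case $\ell-1\le N+\widehat{\conn}B_{rr}$. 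Taking the supremum over $v$ gives $\widehat{\conn}B\le N+\widehat{\conn}B_{rr}$. I expect this last argument to be the main obstacle: the crucial point is that a minimal walk splits precisely at its first entry into the last block — a simple path through the first $r-1$ blocks (length $\le N$) followed by a shortest $B_{rr}$-walk into $\hat{J}(B_{rr})$ — after which everything is bookkeeping with the normal form.
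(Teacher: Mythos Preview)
Your proof is correct. The convergence criterion and the upper bound in \eqref{eq:conn_inequality} follow essentially the paper's route: both arguments take a shortest walk realising $\widehat{\conn}B$, observe that its vertices are distinct, split it at the first entry into the last block, bound the prefix by $N$, and bound the tail by $\widehat{\conn}B_{rr}$ via a splice-in-a-shorter-tail minimality argument. Your treatment is slightly more explicit (you separate out the case where the walk never reaches $V_r$, and the case $t=\ell$), but the idea is the same.

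The genuine difference is in the \emph{lower} bound. The paper does not argue combinatorially here at all: it notes that the diagonal blocks of $(PBP^{\intercal})^{k}$ are $B_{11}^{k},\ldots,B_{rr}^{k}$, so $\Vert B_{ii}^{k}\Vert_{\infty}\le\Vert B^{k}\Vert_{\infty}$, and then invokes \prettyref{thm:eventual_contractivity} to translate this norm inequality directly into $\widehat{\conn}B_{ii}\le\widehat{\conn}B$. Your argument instead stays entirely at the level of the digraph: using facts (ii) and (iii) you show that any walk in $\hat{P}_{v}(B)$ must begin with a segment lying wholly in $V_i$ and terminating in $\hat{J}(B_{ii})$, hence of length at least $\widehat{\conn}B_{ii}$. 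The paper's proof is shorter because it cashes in on \prettyref{thm:eventual_contractivity}; yours is self-contained, uses only the definition of $\widehat{\conn}$, and in particular would survive even without the norm characterisation.
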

\begin{proof}
The first claim is a consequence of \prettyref{cor:irreducibly_substochastic}
and \eqref{eq:spectrum}.

We prove now the leftmost inequality in \eqref{eq:conn_inequality}.
First, note that $\Vert B^{k}\Vert_{\infty}=\Vert PB^{k}P^{\intercal}\Vert_{\infty}=\Vert(PBP^{\intercal})^{k}\Vert_{\infty}$.
Moreover, the block diagonal entries of $(PBP^{\intercal})^{k}$ are
the matrices $B_{11}^{k},\ldots,B_{rr}^{k}$. Therefore, for each
$i$, $\Vert B_{ii}^{k}\Vert_{\infty}\leq\Vert B^{k}\Vert_{\infty}$
and hence $\widehat{\conn}B_{ii}\leq\widehat{\conn}B$ by \prettyref{thm:eventual_contractivity}.

We prove now the rightmost inequality in \eqref{eq:conn_inequality}.
If $\widehat{\conn}B\leq N$, the inequality is trivial. As such,
we proceed assuming that $N<\widehat{\conn}B<\infty$. First, note
that $\widehat{\conn}B=\widehat{\conn}(PBP^{\intercal})$. Therefore,
$\widehat{\conn}B=|p|$ where $p$ is a walk whose length is no larger
than any walk in $\hat{P}_{i_{1}}(PBP^{\intercal})$ and $i_{1}\coloneqq\head p$.
Due to the block triangular structure of $PBP^{\intercal}$, we can
write $p$ as
\[
p=i_{1}\rightarrow\cdots\rightarrow i_{u}\rightarrow j_{1}\rightarrow\cdots\rightarrow j_{v}
\]
where $u\leq N$ and $j_{k}>N$ for all $k$. Defining $j_{k}^{\prime}\coloneqq j_{k}-N$,
it follows that $p^{\prime}\coloneqq j_{1}^{\prime}\rightarrow\cdots\rightarrow j_{v}^{\prime}$
is a walk whose length is no larger than any walk in $\hat{P}_{j_{1}^{\prime}}(B_{rr})$,
from which we obtain $|p^{\prime}|\leq\widehat{\conn}B_{rr}$. Therefore,
\[
\widehat{\conn}B=\left|p\right|\leq u+\left|p^{\prime}\right|\leq N+\widehat{\conn}B_{rr}.\qedhere
\]
\end{proof}
Returning to our goal of proving \prettyref{thm:eventual_contractivity},
we first establish some lemmata related to substochastic matrices.
The first lemma is a consequence of definitions and requires no proof.
\begin{lem}
\label{lem:contraction_characterization}Let $B$ be an $m\times n$
substochastic matrix. Then, $\Vert B\Vert_{\infty}<1$ if and only
if $\hat{J}(B)=\{1,\ldots,m\}$.
\end{lem}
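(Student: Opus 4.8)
The statement is essentially a restatement of definitions, so the ``proof'' I would give is a short chain of equivalences. The first step is to reduce the infinity norm to row sums: since $B=(b_{ij})$ has nonnegative entries, the induced matrix norm satisfies
\[
\Vert B\Vert_\infty = \max_{1\le i\le m}\sum_{j}\lvert b_{ij}\rvert = \max_{1\le i\le m}\sum_{j} b_{ij},
\]
that is, $\Vert B\Vert_\infty$ is simply the largest of the $m$ row sums of $B$. (I am only invoking the standard formula for the operator norm induced by $\Vert\cdot\Vert_\infty$ on vectors, which is equally valid for rectangular $B$.)

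The second step is to match this against the definition $\hat{J}(B)=\left\{1\le i\le m\colon\sum_j b_{ij}<1\right\}$. If $\Vert B\Vert_\infty<1$ then every row sum is $<1$, i.e.\ every index $i\in\{1,\dots,m\}$ lies in $\hat{J}(B)$, so $\hat{J}(B)=\{1,\dots,m\}$ (the inclusion $\hat{J}(B)\subseteq\{1,\dots,m\}$ holding by construction). Conversely, if $\hat{J}(B)=\{1,\dots,m\}$ then each of the finitely many row sums is $<1$, hence so is their maximum, giving $\Vert B\Vert_\infty<1$. The only points worth stating carefully are exactly these two: that $\hat{J}(B)$ is a subset of $\{1,\dots,m\}$ by its very definition, so ``every index lies in $\hat{J}(B)$'' is the same as ``$\hat{J}(B)$ is all of $\{1,\dots,m\}$''; and that the maximum of \emph{finitely} many reals each less than $1$ is again less than $1$, which is where finiteness of the row index set (i.e.\ $m<\infty$) is used. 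There is no genuine obstacle. I would also note that this lemma is precisely the $k=1$ instance of the behaviour analysed in \prettyref{thm:eventual_contractivity} and will serve as the base case when one later shows that $\Vert B^{k}\Vert_\infty<1$ holds exactly for $k>\widehat{\conn}B$.
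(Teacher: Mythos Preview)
Your proof is correct and matches the paper's treatment: the paper explicitly states that this lemma ``is a consequence of definitions and requires no proof,'' and your argument is precisely the short unpacking of the formula $\Vert B\Vert_\infty=\max_i\sum_j b_{ij}$ against the definition of $\hat{J}(B)$ that justifies this remark.
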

\begin{lem}
\label{lem:BC_results}Let $B\coloneqq(b_{ij})$ and $C\coloneqq(c_{ij})$
be compatible (i.e., the product $BC$ is well-defined) substochastic
matrices. Then,
\begin{enumerate}[label=(\roman*), ref=(\emph{\roman*})]
\item \label{enu:BC_results_1}$BC$ is a substochastic matrix.
\item \label{enu:BC_results_2}If $i\in\hat{J}(B)$, then $i\in\hat{J}(BC)$.
\item \label{enu:BC_results_3}If $i\notin\hat{J}(B)$, then $i\in\hat{J}(BC)$
if and only if there exists $h\in\hat{J}(C)$ such that $i\rightarrow h$
is an edge in $\graph B$.
\item \label{enu:BC_results_4}$i\rightarrow j$ is an edge in $\graph(BC)$
if and only if there exist edges $i\rightarrow h$ and $h\rightarrow j$
in $\graph B$ and $\graph C$, respectively.
\end{enumerate}
\end{lem}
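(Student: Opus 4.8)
The plan is to derive all four statements from the single identity
\[
\sum_{j}(BC)_{ij}=\sum_{h}b_{ih}\Bigl(\sum_{j}c_{hj}\Bigr),
\]
obtained by interchanging the order of summation in $\sum_{j}\sum_{h}b_{ih}c_{hj}$, together with the fact that every quantity appearing in it is nonnegative. Write $B$ as $m\times p$ and $C$ as $p\times n$, so that $BC$ is $m\times n$ and the index $h$ above ranges over $\{1,\dots,p\}$. For \ref{enu:BC_results_1}, the entries of $BC$ are sums of products of nonnegative numbers, hence nonnegative, and the displayed identity bounds the $i$-th row-sum of $BC$ by $\sum_{h}b_{ih}\cdot 1\le 1$; thus $BC$ is substochastic.

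For \ref{enu:BC_results_2} and \ref{enu:BC_results_3} I would rewrite the identity in the equivalent ``defect'' form
\[
1-\sum_{j}(BC)_{ij}=\Bigl(1-\sum_{h}b_{ih}\Bigr)+\sum_{h}b_{ih}\Bigl(1-\sum_{j}c_{hj}\Bigr),
\]
which is a sum of nonnegative terms since each $b_{ih}\ge 0$ and each inner defect is nonnegative ($C$ being substochastic). If $i\in\hat{J}(B)$, the first term on the right is strictly positive, hence so is the left side, i.e.\ $i\in\hat{J}(BC)$; this is \ref{enu:BC_results_2}. If instead $i\notin\hat{J}(B)$, the first term vanishes, so $i\in\hat{J}(BC)$ holds if and only if $\sum_{h}b_{ih}\bigl(1-\sum_{j}c_{hj}\bigr)>0$; being a sum of nonnegative terms, this is positive precisely when some $h$ satisfies both $b_{ih}>0$ and $\sum_{j}c_{hj}<1$, that is, $i\rightarrow h$ is an edge of $\graph B$ and $h\in\hat{J}(C)$. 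This is \ref{enu:BC_results_3}.

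For \ref{enu:BC_results_4}, since $(BC)_{ij}=\sum_{h}b_{ih}c_{hj}$ is a sum of nonnegative terms, it is nonzero if and only if some summand $b_{ih}c_{hj}$ is nonzero, equivalently some $h$ has $b_{ih}\neq 0$ and $c_{hj}\neq 0$, i.e.\ $i\rightarrow h\in\graph B$ and $h\rightarrow j\in\graph C$; translating ``$(BC)_{ij}\neq 0$'' into ``$i\rightarrow j$ is an edge of $\graph(BC)$'' finishes it.

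None of these steps is difficult — the proof is essentially bookkeeping. The one place I expect to slow down is checking that the digraph padding convention for non-square matrices causes no trouble: every vertex $h$ above lies in $\{1,\dots,p\}$, a vertex $>m$ has no out-edges in $\graph B$, a vertex $>n$ has no in-edges in $\graph C$, and $\hat{J}(C)\subseteq\{1,\dots,p\}$, so vertices are identified consistently across $\graph B$, $\graph C$, and $\graph(BC)$, and appending zero rows or columns alters none of the sums above.
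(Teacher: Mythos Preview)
Your proof is correct and follows essentially the same approach as the paper: both rely on the identity $\sum_{j}(BC)_{ij}=\sum_{h}b_{ih}\sum_{j}c_{hj}$ and nonnegativity to read off each item, with your ``defect'' reformulation being a clean repackaging of the paper's case analysis for \ref{enu:BC_results_2} and \ref{enu:BC_results_3}. Your explicit check that the non-square padding convention is harmless is a nice addition the paper omits.
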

\begin{proof}
~
\begin{enumerate}[label=(\emph{\roman*})]
\item $BC$ has nonnegative entries and $\Vert BCe\Vert_{\infty}\leq\Vert BC\Vert_{\infty}\leq\Vert B\Vert_{\infty}\Vert C\Vert_{\infty}\leq1$.
\item Note first that $\sum_{j}[BC]_{ij}=\sum_{j}\sum_{k}b_{ik}c_{kj}=\sum_{k}b_{ik}\sum_{j}c_{kj}\leq\sum_{k}b_{ik}$.
If $i\in\hat{J}(B)$, then $\sum_{k}b_{ik}<1$ and the desired result
follows.
\item Suppose $i\notin\hat{J}(B)$. If there exists $h\in\hat{J}(C)$ such
that $i\rightarrow h$ is an edge in $\graph B$, then $\sum_{j}c_{hj}<1$
and $\sum_{j}[BC]_{ij}=b_{ih}\sum_{j}c_{hj}+\sum_{k\neq h}b_{ik}\sum_{j}c_{kj}<\sum_{k}b_{ik}\leq1$.
Otherwise, $\sum_{j}c_{kj}=1$ for all $k$ with $b_{ik}\neq0$ and
hence $\sum_{j}[BC]_{ij}=\sum_{k}b_{ik}\sum_{j}c_{kj}=\sum_{k}b_{ik}=1$.
\item Suppose $i\rightarrow h$ and $h\rightarrow j$ are edges in $\graph B$
and $\graph C$, respectively. Then, $[BC]_{ij}=\sum_{k}b_{ik}c_{kj}\geq b_{ih}c_{hj}>0$.
Otherwise, for each $k$, at least one of $b_{ik}$ or $c_{kj}$ is
zero and hence $[BC]_{ij}=0$.\hfill\qedhere
\end{enumerate}
\end{proof}
\begin{lem}
\label{lem:shrinking_paths}Let $B$ be a square substochastic matrix,
$i\notin\hat{J}(B)$, and $k$ be a positive integer. Then, $i\in\hat{J}(B^{k})$
if and only if there is a walk $p$ in $\hat{P}_{i}(B)$ such that
$|p|<k$.
\end{lem}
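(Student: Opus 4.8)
The plan is to induct on the positive integer $k$, establishing the biconditional simultaneously for every row $i\notin\hat{J}(B)$. As a preliminary I would record the monotonicity statement $\hat{J}(B)\subseteq\hat{J}(B^{k})$ for all $k\geq1$: this is immediate by applying \prettyref{lem:BC_results}\ref{enu:BC_results_2} with $C\coloneqq B^{k-1}$, which is substochastic by \prettyref{lem:BC_results}\ref{enu:BC_results_1}. The base case $k=1$ is trivial, since $i\notin\hat{J}(B)=\hat{J}(B^{1})$ makes the left-hand side false, while every walk contains at least one edge, so there is no walk of length strictly less than $1$ and the right-hand side is false as well.

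For the inductive step from $k$ to $k+1$, I would write $B^{k+1}=B\,B^{k}$ and invoke \prettyref{lem:BC_results}\ref{enu:BC_results_3} for the row $i\notin\hat{J}(B)$: it gives that $i\in\hat{J}(B^{k+1})$ if and only if $\graph B$ contains an edge $i\rightarrow h$ with $h\in\hat{J}(B^{k})$. The remaining work is to pass between the existence of such an $h$ and the existence of a walk in $\hat{P}_{i}(B)$ of length at most $k$.

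In the forward direction, suppose such an $h$ exists. If $h\in\hat{J}(B)$, then $i\rightarrow h$ is itself a walk in $\hat{P}_{i}(B)$ of length $1\leq k$. If $h\notin\hat{J}(B)$, the inductive hypothesis applied to $h$ yields a walk $q\in\hat{P}_{h}(B)$ with $|q|<k$, and prepending the edge $i\rightarrow h$ produces a walk in $\hat{P}_{i}(B)$ of length $1+|q|\leq k$. In the backward direction, take a walk $p=i\rightarrow i_{2}\rightarrow\cdots\rightarrow i_{\ell}\in\hat{P}_{i}(B)$ with $\ell-1\leq k$ and set $h\coloneqq i_{2}$, so that $i\rightarrow h$ is an edge of $\graph B$. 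If $i_{2}\in\hat{J}(B)$, then $h\in\hat{J}(B)\subseteq\hat{J}(B^{k})$ by the monotonicity above. Otherwise $\ell\geq3$ (if $\ell=2$ then $i_{2}=i_{\ell}\in\hat{J}(B)$, a contradiction), and the tail $i_{2}\rightarrow\cdots\rightarrow i_{\ell}$ is a walk in $\hat{P}_{i_{2}}(B)$ of length $\ell-2<k$, so $h=i_{2}\in\hat{J}(B^{k})$ by the inductive hypothesis. In either case \prettyref{lem:BC_results}\ref{enu:BC_results_3} gives $i\in\hat{J}(B^{k+1})$, completing the induction.

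The only delicate point I foresee is bookkeeping around the ends of walks: because the inductive hypothesis is available only for rows outside $\hat{J}(B)$, one must isolate the two degenerate cases — a walk consisting of a single edge, and a second vertex $i_{2}$ that already lies in $\hat{J}(B)$ — and dispatch them via the monotonicity $\hat{J}(B)\subseteq\hat{J}(B^{k})$ rather than the inductive hypothesis. Beyond that, the argument is a direct unwinding of \prettyref{lem:BC_results}\ref{enu:BC_results_3}.
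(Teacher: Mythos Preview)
Your induction on $k$ is correct. The base case and both directions of the inductive step are handled carefully, and the two boundary cases you flag (a one-edge walk, and a second vertex already in $\hat{J}(B)$) are exactly where the inductive hypothesis is unavailable and monotonicity $\hat{J}(B)\subseteq\hat{J}(B^{k})$ must step in; you dispatch both cleanly.

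The paper proceeds differently. Rather than inducting on $k$, it works directly with a given walk $i_{1}\rightarrow\cdots\rightarrow i_{\ell}$ and uses \prettyref{lem:BC_results}\ref{enu:BC_results_4} repeatedly to show that $i_{1}\rightarrow i_{\ell}$ is an edge in $\graph(B^{\ell-1})$; then \ref{enu:BC_results_3} with $C=B$ gives $i_{1}\in\hat{J}(B^{\ell})$, and \ref{enu:BC_results_2} pushes this up to $\hat{J}(B^{k})$. For the converse it picks the minimal $\ell$ at which $i_{1}$ first enters $\hat{J}(B^{\ell})$, obtains a single edge $i_{1}\rightarrow i_{\ell}$ in $\graph(B^{\ell-1})$ via \ref{enu:BC_results_3}, and then unfolds that edge into a walk of length $\ell-1$ by iterated applications of \ref{enu:BC_results_4}. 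Thus the paper leans on part \ref{enu:BC_results_4} (the edge structure of matrix products), which you never invoke; conversely, you apply \ref{enu:BC_results_3} recursively to peel off one step of the walk at a time. Your route is arguably tidier and avoids the auxiliary claim about edges in $\graph(B^{\ell-1})$; the paper's route makes the connection between walks and powers of $B$ a bit more explicit, which is convenient for the generalization in the appendix to products of distinct substochastic matrices.
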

\begin{proof}
To simplify notation, let $i_{1}\coloneqq i$.

Suppose there exists a walk $i_{1}\rightarrow i_{2}\rightarrow\cdots\rightarrow i_{\ell}$
in $\hat{P}_{i_{1}}(B)$. We claim that $i_{1}\rightarrow i_{\ell}$
is an edge in $\graph(B^{\ell-1})$. If this is the case, \prettyref{lem:BC_results}
\ref{enu:BC_results_2} and \ref{enu:BC_results_3} guarantee that
$i_{1}\in\hat{J}(B^{\ell-1}B)=\hat{J}(B^{\ell})$. If $\ell\leq k$,
$i_{1}\in\hat{J}(B^{k})$ by \prettyref{lem:BC_results} \ref{enu:BC_results_2},
as desired.

We now return to the claim in the previous paragraph. Since the claim
is trivial if $\ell=2$, we proceed assuming $\ell>2$. Let $n$ be
an integer satisfying $2<n\leq\ell$. If $i_{1}\rightarrow i_{n-1}$
is an edge in $\graph(B^{n-2})$, then since $i_{n-1}\rightarrow i_{n}$
is an edge in $\graph B$, \prettyref{lem:BC_results} \ref{enu:BC_results_4}
implies that $i_{1}\rightarrow i_{n}$ is an edge in $\graph(B^{n-2}B)=\graph(B^{n-1})$.
Since $i_{1}\rightarrow i_{2}$ is an edge in $\graph B$, it follows
by induction that $i_{1}\rightarrow i_{\ell}$ is an edge in $\graph(B^{\ell-1})$,
as desired. 

As for the converse,  suppose $i_{1}\in\hat{J}(B^{k})$. Let $\ell$
be the smallest positive integer such that $i_{1}\notin\hat{J}(B^{\ell-1})$
and $i_{1}\in\hat{J}(B^{\ell})$. Since $i_{1}\notin\hat{J}(B)$ and
$i_{1}\in\hat{J}(B^{k})$, it follows that $\ell\leq k$. By \prettyref{lem:BC_results}
\ref{enu:BC_results_3}, there exists $i_{\ell}\in\hat{J}(B)$ such
that $i_{1}\rightarrow i_{\ell}$ is an edge in $\graph(B^{\ell-1})$.

If $\ell=2$, the trivial walk $i_{1}\rightarrow i_{\ell}$ is in
$\hat{P}_{i_{1}}(B)$, and hence we proceed assuming $\ell>2$. Let
$n$ be an integer satisfying $2<n\leq\ell$. If there exists a positive
integer $i_{n}$ such that $i_{1}\rightarrow i_{n}$ is an edge in
$\graph(B^{n-1})=\graph(B^{n-2}B)$, \prettyref{lem:BC_results} \ref{enu:BC_results_4}
implies that there exists a positive integer $i_{n-1}$ such that
$i_{1}\rightarrow i_{n-1}$ is an edge in $\graph(B^{n-2})$ and $i_{n-1}\rightarrow i_{n}$
is an edge in $\graph B$. Since $i_{1}\rightarrow i_{\ell}$ is an
edge in $\graph(B^{\ell-1})$, it follows by induction that $i_{n-1}\rightarrow i_{n}$
is an edge in $\graph B$ for each integer $n$ satisfying $2\leq n\leq\ell$.
Therefore, $i_{1}\rightarrow i_{2}\rightarrow\cdots\rightarrow i_{\ell}$
is a walk in $\hat{P}_{i_{1}}(B)$, as desired.
\end{proof}
We are now ready to prove \prettyref{thm:eventual_contractivity}.
\begin{proof}[Proof of \prettyref{thm:eventual_contractivity}]
Since $\Vert B^{k+1}\Vert_{\infty}\leq\Vert B^{k}\Vert_{\infty}\Vert B\Vert_{\infty}\leq\Vert B^{k}\Vert_{\infty}$,
the inequalities $1\geq\Vert B^{1}\Vert_{\infty}\geq\Vert B^{2}\Vert_{\infty}\geq\cdots$
follow trivially.

The remaining inequalities in the theorem statement follow by applying
\prettyref{lem:shrinking_paths} to each row not in $\hat{J}(B)$
and invoking \prettyref{lem:contraction_characterization}.
\end{proof}

\subsection{M-matrices}

In this subsection, we recall some well-known results on M-matrices
(see, e.g., \cite[Chapter 6]{MR1298430}).
\begin{defn}
An M-matrix is a square matrix $A$ that can be expressed in the form
$A=sI-B$ where $B$ is a nonnegative matrix and $s\geq\rho(B)$ where
$\rho(B)$ is the spectral radius of $B$.
\end{defn}
\begin{defn}
A Z-matrix is a real matrix with nonpositive off-diagonal entries.
\end{defn}
\begin{defn}
\label{def:L_matrix}An L-matrix is a Z-matrix with positive diagonal
entries.
\end{defn}
\begin{prop}
\label{prop:an_M_matrix_is_an_L_matrix}A nonsingular M-matrix is
an L-matrix.
\end{prop}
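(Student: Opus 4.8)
The plan is to verify directly that a nonsingular M-matrix $A$ satisfies the two defining properties of an L-matrix (\prettyref{def:L_matrix}): nonpositive off-diagonal entries and positive diagonal entries. By definition we may write $A = sI - B$ where $B = (b_{ij})$ is nonnegative and $s \geq \rho(B)$. The off-diagonal part is immediate and uses nothing beyond nonnegativity of $B$: for $i \neq j$ we have $a_{ij} = -b_{ij} \leq 0$, so $A$ is a Z-matrix.

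For the diagonal entries, the key observation is that nonsingularity of $A = sI - B$ forces $s$ to \emph{strictly} exceed $\rho(B)$. Indeed, since $A$ is nonsingular, $s$ is not an eigenvalue of $B$; but by the Perron--Frobenius theorem for nonnegative matrices $\rho(B)$ \emph{is} an eigenvalue of $B$, so $s \neq \rho(B)$, and combined with $s \geq \rho(B)$ this gives $s > \rho(B)$. Since every diagonal entry of a nonnegative matrix is bounded above by its spectral radius, $b_{ii} \leq \rho(B) < s$, and therefore $a_{ii} = s - b_{ii} > 0$ for every $i$. This establishes that $A$ is an L-matrix.

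The only step meriting a word of justification is the inequality $b_{ii} \leq \rho(B)$. I would obtain it from Gelfand's formula together with the observation that $(B^{k})_{ii} \geq (b_{ii})^{k}$ — the walk $i \rightarrow i \rightarrow \cdots \rightarrow i$ contributes exactly this much to a sum of nonnegative terms — so that $b_{ii} \leq (B^{k})_{ii}^{1/k} \leq \Vert B^{k} \Vert_{\infty}^{1/k} \to \rho(B)$; alternatively one may simply cite it as a standard fact about nonnegative matrices \cite[Chapter 6]{MR1298430}. I do not anticipate any real obstacle: the proposition is essentially bookkeeping, the one genuinely substantive point being the use of Perron--Frobenius to upgrade $s \geq \rho(B)$ to the strict inequality $s > \rho(B)$, which is precisely where the hypothesis of nonsingularity enters.
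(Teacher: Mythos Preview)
Your argument is correct. The paper, however, does not prove this proposition at all: it is stated in a subsection that explicitly ``recall[s] some well-known results on M-matrices'' with a blanket reference to \cite[Chapter 6]{MR1298430}, and no proof is supplied. So there is nothing to compare against beyond noting that you have filled in a self-contained argument where the paper simply cites the literature; your proof is standard and sound, with the Perron--Frobenius step correctly identified as the place where nonsingularity does its work.
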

\begin{defn}
\label{def:collatz}Let $A$ be a square real matrix. $A$ is monotone
if and only if it is nonsingular and its inverse consists only of
nonnegative entries. 
\end{defn}
\begin{prop}
\label{prop:M_matrix_monotone}The following are equivalent:
\begin{enumerate}[label=(\roman*), ref=(\emph{\roman*})]
\item $A$ is a nonsingular M-matrix.
\item $A$ is a monotone Z-matrix.
\end{enumerate}
\end{prop}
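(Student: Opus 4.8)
The plan is to prove the two implications separately, in each case reducing the question to the behaviour of the Neumann (geometric) series of a nonnegative matrix.

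For (i) $\implies$ (ii), I would write $A = sI - B$ with $B = (b_{ij}) \geq 0$ and $s \geq \rho(B)$. The off-diagonal entries of $A$ are $-b_{ij} \leq 0$, so $A$ is at once a Z-matrix, and it is nonsingular by hypothesis; only monotonicity needs work. To sidestep the boundary case $s = \rho(B)$, I would perturb: for $\epsilon > 0$ set $s_{\epsilon} \coloneqq s + \epsilon$, so that $\rho(B/s_{\epsilon}) = \rho(B)/s_{\epsilon} < 1$ and hence $(I - B/s_{\epsilon})^{-1} = \sum_{k \geq 0}(B/s_{\epsilon})^{k} \geq 0$, which gives $(A + \epsilon I)^{-1} = s_{\epsilon}^{-1}(I - B/s_{\epsilon})^{-1} \geq 0$. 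Letting $\epsilon \downarrow 0$ and using continuity of inversion at the nonsingular matrix $A$ yields $A^{-1} \geq 0$, so $A$ is a monotone Z-matrix.

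For (ii) $\implies$ (i), let $A = (a_{ij})$ be a monotone Z-matrix. First I would choose a scalar $s > 0$ large enough that $B \coloneqq sI - A \geq 0$; this is possible because the off-diagonal entries of $B$ are $-a_{ij} \geq 0$ already (Z-matrix) while the diagonal entries $s - a_{ii}$ are nonnegative once $s \geq \max_{i} a_{ii}$. It then remains to verify $\rho(B) \leq s$, and I would in fact show $\rho(C) < 1$ for $C \coloneqq B/s \geq 0$. Here the key input is that $A = s(I - C)$ is nonsingular with $A^{-1} \geq 0$, so $M \coloneqq (I - C)^{-1} = sA^{-1}$ exists and is entrywise nonnegative. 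From the identity $M = I + CM$, iteration gives $M = \sum_{k=0}^{n} C^{k} + C^{n+1}M$ for every $n$; since every term on the right is nonnegative, the partial sums $\sum_{k=0}^{n} C^{k}$ are entrywise nondecreasing in $n$ and bounded above by $M$, hence convergent. The terms of a convergent series tend to zero, so $C^{n} \to 0$, and therefore $\rho(C) < 1$ (any eigenvalue $\lambda$ with eigenvector $v \neq 0$ satisfies $\lambda^{n} v = C^{n} v \to 0$, forcing $|\lambda| < 1$). Thus $\rho(B) = s\,\rho(C) < s$, and $A = sI - B$ with $B \geq 0$, $s \geq \rho(B)$ exhibits $A$ as a nonsingular M-matrix.

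I expect the only genuine obstacle to be the final paragraph: extracting $\rho(C) < 1$ from nothing more than the entrywise nonnegativity of $(I - C)^{-1}$, which is exactly where the Z-matrix/monotonicity structure is used. Everything else is routine manipulation of the geometric series, plus the small perturbation trick needed to handle the boundary case $s = \rho(B)$ in the first implication.
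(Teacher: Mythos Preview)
Your argument is correct. Both implications go through as written: the perturbation-and-limit argument for (i)$\implies$(ii) is valid (and, as a side remark, slightly more than you need, since Perron--Frobenius already guarantees that $\rho(B)$ is an eigenvalue of the nonnegative matrix $B$, so nonsingularity of $A=sI-B$ forces $s>\rho(B)$ outright); and the Neumann-series bounding argument for (ii)$\implies$(i) is the standard and cleanest route.

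There is nothing to compare against in the paper itself: this proposition is stated in the paper without proof, as one of several ``well-known results on M-matrices'' recalled from a standard reference (Chapter~6 of \cite{MR1298430}). Your write-up is precisely the sort of self-contained proof one finds in such references.
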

We close this subsection by introducing the following enlargement
of the family of L-matrices (\prettyref{def:L_matrix}), to be used
in the sequel.
\begin{defn}
An $\operatorname{L}_{0}$-matrix is a Z-matrix with \emph{nonnegative}
diagonal entries.
\end{defn}

\subsection{Weakly chained diagonally dominant (w.c.d.d.) matrices}

Before we can define w.c.d.d. matrices, we require some preliminary
definitions.
\begin{defn}
Let $A\coloneqq(a_{ij})$ be a complex matrix.
\begin{enumerate}[label=(\emph{\roman*})]
\item The $i$-th row of $A$ is w.d.d. (resp. s.d.d.) if $|a_{ii}|\geq\sum_{j\neq i}|a_{ij}|$
(resp. $>$).
\item $A$ is w.d.d. (resp. s.d.d.) if all of its rows are w.d.d. (resp.
s.d.d.).
\end{enumerate}
\end{defn}
Let $A\coloneqq(a_{ij})$ be an $m\times n$ complex w.d.d. matrix.
We define the sets
\begin{align*}
J(A) & \coloneqq\left\{ 1\leq i\leq m\colon\left|a_{ii}\right|>{\textstyle \sum_{j\neq i}}\left|a_{ij}\right|\right\} \\
\text{ and }P_{i}(A) & \coloneqq\Bigl\{ p\in\walks A\colon\head p=i\text{ and }\last p\in J(A)\Bigr\}.
\end{align*}
Note that if $J(A)$ is empty, so too is $P_{i}(A)$ for each $i$.
We will see shortly that the sets $J(\cdot)$ and $P_{i}(\cdot)$
are related to $\hat{J}(\cdot)$ and $\hat{P}_{i}(\cdot)$.

We are now ready to introduce w.c.d.d. matrices:
\begin{defn}
\label{def:wcdd}A square complex matrix $A$ is w.c.d.d. if the points
below are satisfied:
\begin{enumerate}[label=(\emph{\roman*})]
\item \label{enu:wcdd_1}$A$ is w.d.d.
\item \label{enu:wcdd_extra}$J(A)$ is nonempty.
\item \label{enu:wcdd_2}For each $i\notin J(A)$, $P_{i}(A)$ is nonempty.
\end{enumerate}
\end{defn}

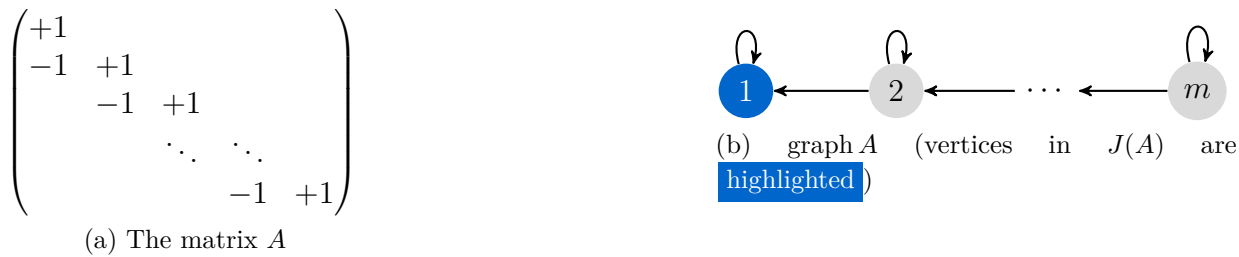
\begin{figure}
\begin{centering}
\subfloat[The matrix $A$]{\begin{centering}
$\begin{pmatrix}+1\\
-1 & +1\\
 & -1 & +1\\
 &  & \ddots & \ddots\\
 &  &  & -1 & +1
\end{pmatrix}$
\par\end{centering}
}\hfill{}\subfloat[$\protect\graph A$ (vertices in $J(A)$ are \colorbox{mylinkcolor}{\textcolor{white}{highlighted}})]{\begin{tikzpicture}[node distance=2cm]
  \node [graph node, fill=mylinkcolor, text=white] (1) {1};
  \node [graph node, right of=1] (2) {2};
  \node [right of=2] (dots) {$\cdots$};
  \node [graph node, right of=dots] (m) {$m$};
  \draw[graph edge, ->, loop above] (1) to (1);
  \draw[graph edge, ->, loop above] (2) to (2);
  \draw[graph edge, ->, loop above] (m) to (m);
  \draw[graph edge, ->] (2) to (1);
  \draw[graph edge, ->] (dots) to (2);
  \draw[graph edge, ->] (m) to (dots);
\end{tikzpicture}

}
\par\end{centering}
\caption{\label{fig:wcdd_example}An example of a w.c.d.d. matrix and its graph}
\end{figure}

We now define the index of \emph{connectivity} associated with a square
complex w.d.d. matrix $A$ as
\[
\conn A\coloneqq\max\left(0,\sup_{i\notin J(A)}\left\{ \infd_{p\in P_{i}(A)}\left|p\right|\right\} \right)
\]
(compare this with the index of \emph{contraction} $\widehat{\conn}$
defined in \eqref{eq:index_of_connectivity}). The lemma below is
a trivial consequence of the definitions above and as such requires
no proof.
\begin{lem}
\label{lem:wcdd_iff_index_of_connectivity_is_finite}A square complex
w.d.d. matrix $A$ is w.c.d.d. if and only if $\conn A$ is finite.
\end{lem}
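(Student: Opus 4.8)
The plan is to prove both implications by unwinding the definition
\[
\conn A=\max\left(0,\sup_{i\notin J(A)}\left\{ \infd_{p\in P_{i}(A)}\left|p\right|\right\} \right),
\]
keeping careful track of the conventions $\inf\emptyset=\infty$ and $\sup\emptyset=-\infty$ and of the fact that the set of rows $\{1,\dots,m\}$, hence the index set of the outer supremum, is finite.

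First I would record two elementary observations. (i) For $i\notin J(A)$, the quantity $\infd_{p\in P_{i}(A)}|p|$ is a finite positive integer when $P_{i}(A)\neq\emptyset$, since an infimum of a nonempty set of positive integers is attained, and it equals $\infty$ when $P_{i}(A)=\emptyset$. (ii) The outer supremum ranges over the finite set $\{i:i\notin J(A)\}$; consequently it equals $-\infty$ exactly when that set is empty, it equals $\infty$ exactly when at least one of its terms is $\infty$, and in every other case it is a finite nonnegative integer. Combining (i) and (ii), $\conn A$ is finite if and only if $P_{i}(A)\neq\emptyset$ for every $i\notin J(A)$.

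For the forward direction, assume $A$ is w.c.d.d. Then $A$ is w.d.d., and the third condition in \prettyref{def:wcdd} gives $P_{i}(A)\neq\emptyset$ for every $i\notin J(A)$; by the characterization just obtained, $\conn A$ is finite.

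For the converse, assume $A$ is w.d.d. with $\conn A$ finite, and verify the three conditions defining a w.c.d.d. matrix. The w.d.d. condition holds by hypothesis. If $J(A)$ were empty then, as $A$ has at least one row, some $i$ would satisfy $i\notin J(A)$ with $P_{i}(A)=\emptyset$ (a walk in $P_{i}(A)$ must end in $J(A)$), forcing $\conn A=\infty$ by (i)--(ii), a contradiction; hence $J(A)\neq\emptyset$. Likewise, if some $i\notin J(A)$ had $P_{i}(A)=\emptyset$, then $\conn A=\infty$, again a contradiction; hence $P_{i}(A)\neq\emptyset$ for every $i\notin J(A)$. Thus $A$ is w.c.d.d. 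The only delicate point in all of this --- and the reason the statement is worth isolating even though it is immediate --- is the bookkeeping around empty infima and suprema; there is no real obstacle.
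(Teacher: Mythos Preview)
Your argument is correct and carefully handles the bookkeeping around empty infima and suprema; it is precisely the verification the paper has in mind when it declares the lemma ``a trivial consequence of the definitions above'' and omits any proof. In other words, you have written out what the paper leaves to the reader, so there is nothing to compare.
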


We are now able to establish a duality between w.d.d. L-matrices (or
more accurately, $\operatorname{L}_{0}$-matrices) and substochastic
matrices that, as we will see, connects the nonsingularity of the
former to the convergence of the latter.
\begin{lem}
\label{lem:link_between_wcdd_and_submarkov}Let $A\coloneqq(a_{ij})$
be an $n\times n$ w.d.d. $\operatorname{L}_{0}$-matrix and $D\coloneqq(d_{ij})$
be an $n\times n$ diagonal matrix whose diagonal entries are positive
and satisfy $d_{ii}\leq1/a_{ii}$ for each $i$ such that $a_{ii}\neq0$.
Then, $B\coloneqq I-DA$ is substochastic and
\begin{equation}
\conn A=\widehat{\conn}B.\label{eq:duality}
\end{equation}

Conversely, let $B$ be an $n\times n$ substochastic matrix and $D$
be an $n\times n$ diagonal matrix whose diagonal entries are positive.
Then, $A\coloneqq D(I-B)$ is a w.d.d. $\operatorname{L}_{0}$-matrix
and \eqref{eq:duality} holds.
\end{lem}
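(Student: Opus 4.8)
The plan is to check the two structural claims by a direct entrywise computation, and then to obtain \eqref{eq:duality} from the observation that in both settings the pair $(J(A),\graph A)$ and the pair $(\hat{J}(B),\graph B)$ encode the same value of the relevant index.

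For the first assertion, write $b_{ij}=\delta_{ij}-d_{ii}a_{ij}$. Because $A$ is a Z-matrix with nonnegative diagonal, $d_{ii}>0$, and $d_{ii}a_{ii}\le 1$ whenever $a_{ii}\ne 0$, every $b_{ij}$ is nonnegative and $b_{ii}\le 1$; and w.d.d.\ gives $\sum_j a_{ij}=a_{ii}-\sum_{j\ne i}|a_{ij}|\ge 0$, whence $\sum_j b_{ij}=1-d_{ii}\sum_j a_{ij}\le 1$. So $B$ is substochastic. The same two identities show that $\sum_j b_{ij}<1\iff|a_{ii}|>\sum_{j\ne i}|a_{ij}|$, and that for $i\ne j$ one has $b_{ij}\ne 0\iff a_{ij}\ne 0$; that is, $\hat{J}(B)=J(A)$ and $\graph A$, $\graph B$ have identical off-diagonal edge sets. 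For the converse, write $a_{ij}=d_{ii}(\delta_{ij}-b_{ij})$: substochasticity forces $a_{ij}\le 0$ for $i\ne j$ and $a_{ii}=d_{ii}(1-b_{ii})\ge 0$, so $A$ is an $\operatorname{L}_{0}$-matrix, while $|a_{ii}|-\sum_{j\ne i}|a_{ij}|=d_{ii}\bigl(1-\sum_j b_{ij}\bigr)\ge 0$ makes $A$ w.d.d.; and exactly as before $J(A)=\hat{J}(B)$ and the off-diagonal edges of $\graph A$ and $\graph B$ coincide.

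It then remains to deduce that $J(A)=\hat{J}(B)$ together with ``$\graph A$ and $\graph B$ agree off the diagonal'' implies $\conn A=\widehat{\conn}B$. The step I expect to need the most care is dealing with the fact that the two graphs may differ precisely on self-loops (the diagonals of $A$ and $B$ are only constrained to be nonnegative, not nonzero), so I would isolate the following observation: for any digraph $G=(V,E)$, any $S\subseteq V$, and any $i\in V\setminus S$, the minimum number of edges in a walk from $i$ to a vertex of $S$ (understood as $+\infty$ if there is none) is unchanged by deleting all self-loops from $E$. Indeed, deleting edges cannot make it smaller, and a minimum-length such walk cannot traverse a self-loop: excising that single edge from the edge sequence leaves the two neighboring edges consecutive and yields a strictly shorter walk with the same head and last vertex (still nonempty, since a length-one walk from $i\notin S$ to $S$ cannot be a self-loop), contradicting minimality.

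Finally, since $\conn A$ and $\widehat{\conn}B$ are by definition $\max(0,\cdot)$ applied to the supremum over $i\notin J(A)$, respectively $i\notin\hat{J}(B)$, of exactly these minimum walk lengths (under the conventions $\inf\emptyset=\infty$, $\sup\emptyset=-\infty$), applying the observation to $(G,S)=(\graph A,J(A))$ and to $(G,S)=(\graph B,\hat{J}(B))$ reduces the two indices to the same quantity, which establishes \eqref{eq:duality} in both directions.
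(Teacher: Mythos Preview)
Your proof is correct and follows the same approach as the paper: an entrywise computation establishes that $B$ is substochastic (resp.\ that $A$ is a w.d.d.\ $\operatorname{L}_0$-matrix), that $J(A)=\hat{J}(B)$, and that the off-diagonal edge sets of $\graph A$ and $\graph B$ agree, after which \eqref{eq:duality} follows once self-loops are seen to be irrelevant to shortest walk lengths. Your handling of that last step is in fact more careful than the paper's, which asserts the containment $E'\subset E$ (so that $\graph B$ is $\graph A$ with some self-loops removed)---a claim that can fail when some $a_{ii}=0$---whereas you only use agreement off the diagonal, which is exactly what is needed.
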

\begin{proof}
We prove only the first claim, the converse being handled similarly.

Let $A$ and $B\coloneqq I-DA$ be given as in the lemma statement.
To simplify notation, denote by $a_{ij}$ and $b_{ij}$ the elements
of $A$ and $B$. First, note that $b_{ii}=1-d_{ii}a_{ii}\geq0$ and
$b_{ij}=-d_{ii}a_{ij}\geq0$ whenever $i\neq j$. Since 
\[
\sum_{j}b_{ij}=1-\sum_{j}d_{ii}a_{ij}=1-d_{ii}\left(a_{ii}-\sum_{j\neq i}\left|a_{ij}\right|\right)\leq1,
\]
it follows that $B$ is substochastic and $J(A)=\hat{J}(B)$. Letting
$\graph A\equiv(V,E)$ and $\graph B\equiv(V^{\prime},E^{\prime})$,
note that $V=V^{\prime}$ and
\[
E\setminus\{(i,i)\}_{i}\subset E^{\prime}\subset E.
\]
More concisely, $\graph B$ is simply $\graph A$ with zero or more
self-loops (i.e., edges of the form $i\rightarrow i$) removed. As
a result of these facts, \eqref{eq:duality} follows immediately.
\end{proof}
\begin{example}
\label{exa:point_jacobi_matrix}Let $A\coloneqq(a_{ij})$ be a square
w.d.d. L-matrix of order $n$ and 
\[
B_{A}\coloneqq I-\diag(a_{11},\ldots,a_{nn})^{-1}A
\]
denote the point Jacobi matrix associated with $A$ (cf. \cite[Chapter 3]{MR1753713}).
By the previous results, $A$ is w.c.d.d. if and only if $\conn A=\widehat{\conn}B_{A}$
is finite.

Note that the substochastic matrix in \prettyref{fig:submarkov_example}
is the point Jacobi matrix associated with the w.d.d. L-matrix in
\prettyref{fig:wcdd_example}.
\end{example}
We now restate and prove characterization \eqref{eq:characterization}
from the introduction.
\begin{thm}
\label{thm:characterization}The following are equivalent:
\begin{enumerate}[label=(\roman*), ref=(\emph{\roman*})]
\item \label{enu:characterization_1}$A$ is a nonsingular w.d.d. M-matrix.
\item \label{enu:characterization_2}$A$ is a nonsingular w.d.d. L-matrix.
\item \label{enu:characterization_3}$A$ is a w.c.d.d. L-matrix.
\end{enumerate}
\end{thm}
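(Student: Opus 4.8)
The plan is to prove the cycle of implications \ref{enu:characterization_1} $\implies$ \ref{enu:characterization_2} $\implies$ \ref{enu:characterization_3} $\implies$ \ref{enu:characterization_1}. The first implication is immediate from \prettyref{prop:an_M_matrix_is_an_L_matrix}: a nonsingular M-matrix is an L-matrix, so a nonsingular w.d.d. M-matrix is in particular a nonsingular w.d.d. L-matrix.

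For the remaining two implications the workhorse is the point Jacobi matrix $B_{A} \coloneqq I - DA$ with $D \coloneqq \diag(a_{11},\ldots,a_{nn})^{-1}$, as in \prettyref{exa:point_jacobi_matrix}. This is well-defined whenever $A$ is an L-matrix, since then every $a_{ii} > 0$; in particular $D$ has positive diagonal entries with $d_{ii} = 1/a_{ii}$, so \prettyref{lem:link_between_wcdd_and_submarkov} applies and tells us that $B_{A}$ is substochastic with $\conn A = \widehat{\conn}B_{A}$. Since $D$ is invertible, $A$ is nonsingular if and only if $I - B_{A} = DA$ is; by \prettyref{cor:eventual_contractivity} this holds if and only if $\widehat{\conn}B_{A}$ --- equivalently $\conn A$ --- is finite, which by \prettyref{lem:wcdd_iff_index_of_connectivity_is_finite} is exactly the assertion that the w.d.d. matrix $A$ is w.c.d.d. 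In short: a w.d.d. L-matrix is nonsingular if and only if it is w.c.d.d. This immediately gives \ref{enu:characterization_2} $\implies$ \ref{enu:characterization_3}, and it also supplies the nonsingularity (and the w.d.d. property, which is part of \prettyref{def:wcdd}) needed for \ref{enu:characterization_3} $\implies$ \ref{enu:characterization_1}.

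It then remains to see that a w.c.d.d. L-matrix $A$ is an M-matrix. By \prettyref{prop:M_matrix_monotone} and the fact that $A$ is a Z-matrix, it suffices to prove $A$ is monotone, i.e. $A^{-1} \geq 0$. Writing $A = D^{-1}(I - B_{A})$ gives $A^{-1} = (I - B_{A})^{-1}D$, and since $D \geq 0$ it is enough to show $(I - B_{A})^{-1} \geq 0$. Now $\conn A$ finite forces $\widehat{\conn}B_{A}$ finite, so by \prettyref{cor:eventual_contractivity} the substochastic matrix $B_{A}$ is convergent, i.e. $\rho(B_{A}) < 1$; hence the Neumann series converges and $(I - B_{A})^{-1} = \sum_{k \geq 0} B_{A}^{k}$. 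Each summand is nonnegative because $B_{A} \geq 0$, so the sum is nonnegative, which closes the cycle.

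I do not expect a genuine obstacle --- every step reduces to a result already established --- but the two points to handle with care are: first, that \prettyref{lem:link_between_wcdd_and_submarkov} really does apply to the specific choice $D = \diag(a_{11},\ldots,a_{nn})^{-1}$ (it does, since an L-matrix has strictly positive diagonal, so the hypothesis $0 < d_{ii} \leq 1/a_{ii}$ holds with equality); and second, that convergence of a substochastic matrix truly delivers the Neumann-series identity (it does, since for any matrix convergence is equivalent to spectral radius strictly below $1$, and \prettyref{cor:eventual_contractivity} provides the convergence).
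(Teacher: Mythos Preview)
Your proof is correct and follows essentially the same route as the paper: \ref{enu:characterization_1}$\implies$\ref{enu:characterization_2} via \prettyref{prop:an_M_matrix_is_an_L_matrix}, and \ref{enu:characterization_2}$\implies$\ref{enu:characterization_3} via the point Jacobi matrix together with \prettyref{lem:link_between_wcdd_and_submarkov}, \prettyref{cor:eventual_contractivity}, and \prettyref{lem:wcdd_iff_index_of_connectivity_is_finite}. For \ref{enu:characterization_3}$\implies$\ref{enu:characterization_1} the paper's formal proof simply cites an external reference (Bramble--Hubbard), whereas you supply the self-contained Neumann-series monotonicity argument; the paper gives precisely this alternative in the remark immediately following the theorem, so your argument is not new but has the advantage of keeping the proof internal to the paper's own results.
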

Since a nonsingular w.d.d. $\operatorname{L}_{0}$-matrix must be
an L-matrix, we can safely replace all occurrences of ``L-matrix''
with ``$\operatorname{L}_{0}$-matrix'' in the above theorem without
affecting its validity (recall that any w.c.d.d. matrix is nonsingular
\cite{MR0332820}). 
\begin{proof}
\ref{enu:characterization_1}$\implies$\ref{enu:characterization_2}
follows from \prettyref{prop:an_M_matrix_is_an_L_matrix} while \ref{enu:characterization_3}$\implies$\ref{enu:characterization_1}
is established in \cite[Theorem 2.2]{MR0162367}. We prove below the
claim \ref{enu:characterization_2}$\implies$\ref{enu:characterization_3}.

Let $A\coloneqq(a_{ij})$ be a nonsingular w.d.d. L-matrix of order
$n$. Then, the associated point Jacobi matrix $B_{A}$ is substochastic
and $I-B_{A}$ is nonsingular since
\[
I-B_{A}=\diag(a_{11},\ldots,a_{nn})^{-1}A.
\]
\prettyref{cor:eventual_contractivity} and \prettyref{lem:link_between_wcdd_and_submarkov}
imply that $\conn A=\widehat{\conn}B_{A}$ is finite. Therefore, by
\prettyref{lem:wcdd_iff_index_of_connectivity_is_finite}, $A$ is
w.c.d.d.
\end{proof}
\begin{rem}
Instead of calling upon the results of \cite{MR0162367}, it is also
possible to prove \ref{enu:characterization_3}$\implies$\ref{enu:characterization_1}
of \prettyref{thm:characterization} directly by using arguments involving
the index of contraction. In particular, let $A$ be a w.c.d.d. L-matrix
of order $n$. Then, by \prettyref{lem:wcdd_iff_index_of_connectivity_is_finite}
and \prettyref{lem:link_between_wcdd_and_submarkov}, the associated
point Jacobi matrix $B_{A}$ is substochastic with $\widehat{\conn}B_{A}=\conn A$
finite. By \prettyref{cor:eventual_contractivity}, $B_{A}$ is convergent
and hence the Neumann series $I+B_{A}+B_{A}^{2}+\cdots$ for the inverse
of $I-B_{A}$ converges to a matrix whose entries are nonnegative.
Therefore, $A$ is monotone by \prettyref{def:collatz}, and hence
a nonsingular M-matrix by \prettyref{prop:M_matrix_monotone}.
\end{rem}
An immediate consequence of \prettyref{thm:characterization}, which
can be considered an analogue of \prettyref{cor:irreducibly_substochastic},
is given below.
\begin{cor}
A square irreducible w.d.d. L-matrix $A$ is a nonsingular M-matrix
if and only if $J(A)$ is nonempty.
\end{cor}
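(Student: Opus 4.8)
The plan is to deduce the corollary from \prettyref{thm:characterization} together with the elementary fact that an irreducible matrix has a strongly connected digraph. Since $A$ is assumed to be an L-matrix, the equivalence \ref{enu:characterization_1}$\iff$\ref{enu:characterization_3} of \prettyref{thm:characterization} already tells us that $A$ is a nonsingular M-matrix precisely when $A$ is w.c.d.d. So it suffices to show that, for a square \emph{irreducible w.d.d.} matrix $A$, being w.c.d.d. is equivalent to $J(A)$ being nonempty. One direction of this is immediate from \prettyref{def:wcdd} \ref{enu:wcdd_extra}.

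For the converse, suppose $J(A)\neq\emptyset$ and fix some $j\in J(A)$. Because $A$ is irreducible, $\graph A$ is strongly connected, so for every vertex $i\notin J(A)$ (in particular $i\neq j$) there is a walk from $i$ to $j$ in $\graph A$; such a walk is nonempty and lies in $P_{i}(A)$, hence $P_{i}(A)\neq\emptyset$. Since $A$ is w.d.d. by hypothesis, all three conditions of \prettyref{def:wcdd} hold, i.e.\ $A$ is w.c.d.d. Chaining this with \prettyref{thm:characterization} gives: $A$ is a nonsingular M-matrix $\iff$ $A$ is a w.c.d.d. L-matrix $\iff$ $A$ is w.c.d.d. $\iff$ $J(A)\neq\emptyset$.

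An equivalent route — the one suggested by the remark that this result is an analogue of \prettyref{cor:irreducibly_substochastic} — is to pass to the point Jacobi matrix $B_{A}$ of \prettyref{exa:point_jacobi_matrix}: by \prettyref{lem:link_between_wcdd_and_submarkov}, $B_{A}$ is substochastic with $\hat{J}(B_{A})=J(A)$, and since $\graph B_{A}$ is $\graph A$ with some self-loops deleted, $B_{A}$ is irreducible exactly when $A$ is; then \prettyref{cor:irreducibly_substochastic} applied to $B_{A}$, the equivalence \ref{enu:eventual_contractivity_2}$\iff$\ref{enu:eventual_contractivity_3} of \prettyref{cor:eventual_contractivity}, and the identity $I-B_{A}=\diag(a_{11},\ldots,a_{nn})^{-1}A$ yield the same chain. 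Either way there is no serious obstacle; the only points that warrant a word of care are that deleting self-loops preserves strong connectivity (so irreducibility transfers between $A$ and $B_{A}$) and the degenerate case $n=1$, for which $J(A)=\{1\}$ is automatically nonempty and $A=(a_{11})$ with $a_{11}>0$ is automatically a nonsingular M-matrix, so the statement holds trivially regardless of how one treats irreducibility there.
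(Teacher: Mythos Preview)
Your proposal is correct and matches the paper's own (implicit) argument: the paper merely states that the corollary is ``an immediate consequence of \prettyref{thm:characterization},'' and your first route is precisely the natural unpacking of that claim---reduce via \ref{enu:characterization_1}$\iff$\ref{enu:characterization_3} to showing that an irreducible w.d.d. matrix is w.c.d.d. exactly when $J(A)\neq\emptyset$, and then use strong connectivity of $\graph A$. Your alternative route through $B_{A}$ and \prettyref{cor:irreducibly_substochastic} is also sound and nicely illustrates the duality, though the paper does not spell it out.
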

While the reverse direction in the above result is well-known \cite[Corollary 3.20]{MR1753713},
we are not aware of a reference for the forward direction.

\section{\label{sec:computation}Computing the index of contraction}

In this section, we present a procedure to compute the index of contraction
$\widehat{\conn}B$ of a substochastic matrix $B$ and show that it
is robust in the presence of inexact (i.e., floating point) arithmetic.

By the results of the previous section, such a procedure can also
be used to determine if an arbitrary w.d.d. matrix $A$ is a nonsingular
M-matrix as follows. If $A$ is not a square L-matrix, it is trivially
not a nonsingular M-matrix (\prettyref{prop:an_M_matrix_is_an_L_matrix}).
Otherwise, we can check the finitude of the index of contraction of
its associated point Jacobi matrix $B_{A}$ to determine whether or
not $A$ is a nonsingular M-matrix (recall \prettyref{exa:point_jacobi_matrix}
and \prettyref{thm:characterization}).

\subsection{The procedure}

Before we can describe the procedure, we require the notion of a vertex
contraction (a.k.a. vertex identification), a generalization of the
well-known notion of edge contraction from graph theory.
\begin{defn}
Let $G\equiv(V,E)$ be a graph, $W\subset V$, $w$ denote a new vertex
(i.e., $w\notin V$), and $f$ be a function which maps every vertex
in $V\setminus W$ to itself and every vertex in $W$ to $w$ (i.e.,
$f|_{V\setminus W}=\operatorname{id}_{V\setminus W}$ and $f|_{W}(\cdot)=w$).
The vertex contraction of $G$ with respect to $W$ is a new graph
$G^{\prime}\equiv(V^{\prime},E^{\prime})$ where $V^{\prime}\coloneqq(V\setminus W)\cup\{w\}$
and $E^{\prime}\coloneqq\{(f(i),f(j))\colon(i,j)\in E\}$.
\end{defn}
An overview of the procedure for computing the index of contraction
for an arbitrary substochastic matrix $B$ is given below:
\begin{enumerate}[label=(\emph{\arabic*})]
\item \label{enu:steps_1}Obtain the vertex contraction of $\graph B$
with respect to $\hat{J}(B)$. Label the new vertex in the contraction
$w=0$ and the new vertex set $V^{\prime}$. Note that $V^{\prime}=\hat{J}(B)^{\complement}\cup\{0\}$
(recall that the superscript $\complement$ denotes complement).
\item \label{enu:steps_2}Reverse all edges in the resulting graph.
\item \label{enu:steps_3}In the resulting graph, find the shortest distances
$d(i)$ from the new vertex $0$ to all vertices $i\in V^{\prime}$
by a breadth-first search (BFS) starting at $0$. It is understood
that $d(0)=0$ and that if $i$ is unvisited in the BFS, $d(i)=\infty$.
\item \label{enu:steps_4}Return $\max_{i\in V^{\prime}}d(i)$.
\end{enumerate}
That this procedure terminates is trivial (BFS is performed on a graph
with finitely many vertices). As for the correctness of the procedure,
it is easy to verify that
\[
d(i)=\inf_{p\in\hat{P}_{i}(B)}|p|\text{ for }i\notin\hat{J}(B)
\]
so that $\widehat{\conn}B=\max(0,\sup_{i\notin\hat{J}(B)}d(i))=\max_{i\in V^{\prime}}d(i)$.
\begin{rem}
\label{rem:useless_edges}Since BFS does not revisit vertices, the
correctness of the procedure is unaffected if $\graph B$ is preprocessed
to remove self-loops (i.e., edges of the form $i\rightarrow i$) and
edges of the form $i\rightarrow j$ with $i\in\hat{J}(B)$.
\end{rem}
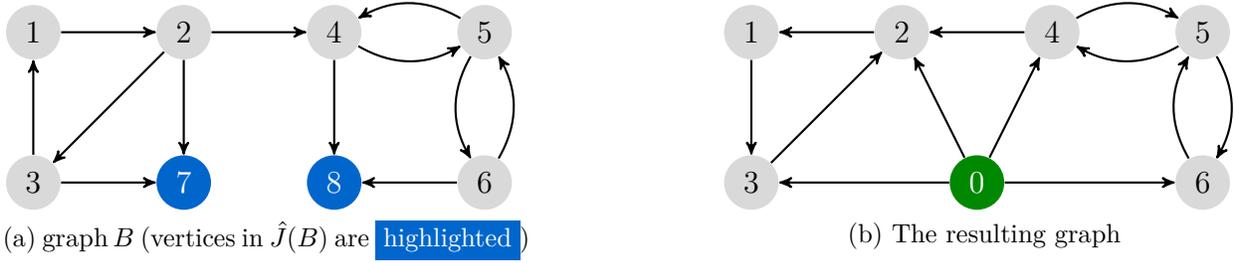
\begin{figure}
\begin{centering}
\subfloat[$\protect\graph B$ (vertices in $\hat{J}(B)$ are \colorbox{mylinkcolor}{\textcolor{white}{highlighted}})]{\begin{tikzpicture}[node distance=2cm]
  \node [graph node] (1) {1};
  \node [graph node, right of=1] (2) {2};
  \node [graph node, below of=1] (3) {3};
  \node [graph node, fill=mylinkcolor, text=white, right of=3] (7) {7};
  \node [graph node, fill=mylinkcolor, text=white, right of=7] (8) {8};
  \node [graph node, above of=8] (4) {4};
  \node [graph node, right of=4] (5) {5};
  \node [graph node, below of=5] (6) {6};
  \draw[graph edge, ->] (1) to (2);
  \draw[graph edge, ->] (2) to (3);
  \draw[graph edge, ->] (2) to (4);
  \draw[graph edge, ->] (2) to (7);
  \draw[graph edge, ->] (3) to (1);
  \draw[graph edge, ->] (3) to (7);
  \draw[graph edge, ->] (4) to [bend right] (5);
  \draw[graph edge, ->] (4) to (8);
  \draw[graph edge, ->] (5) to [bend right] (4);
  \draw[graph edge, ->] (5) to [bend right] (6);
  \draw[graph edge, ->] (6) to [bend right] (5);
  \draw[graph edge, ->] (6) to (8);
\end{tikzpicture}

}\hfill{}\subfloat[The resulting graph]{\begin{tikzpicture}[node distance=2cm]
  \node [graph node] (1) {1};
  \node [graph node, right of=1] (2) {2};
  \node [graph node, below of=1] (3) {3};
  \node [graph node, right of=2] (4) {4};
  \node [graph node, fill=mycitecolor, text=white, below of=2, xshift=1cm] (0) {0};
  \node [graph node, right of=4] (5) {5};
  \node [graph node, below of=5] (6) {6};
  \draw[graph edge, ->] (2) to (1);
  \draw[graph edge, ->] (3) to (2);
  \draw[graph edge, ->] (4) to (2);
  \draw[graph edge, ->] (0) to (2);
  \draw[graph edge, ->] (1) to (3);
  \draw[graph edge, ->] (0) to (3);
  \draw[graph edge, ->] (5) to [bend left] (4);
  \draw[graph edge, ->] (0) to (4);
  \draw[graph edge, ->] (4) to [bend left] (5);
  \draw[graph edge, ->] (6) to [bend left] (5);
  \draw[graph edge, ->] (5) to [bend left] (6);
  \draw[graph edge, ->] (0) to (6);
\end{tikzpicture}

}
\par\end{centering}
\caption{\label{fig:step_1_and_2_example}Steps \ref{enu:steps_1} and \ref{enu:steps_2}
applied to an example}
\end{figure}

\begin{algorithm}
\begin{flushleft}
\footnotesize
\hspace*{\algorithmicindent} \textbf{Input:} a square substochastic matrix $B \coloneqq (b_{ij})_{1 \leq i, j \leq n}$ of order $n$ \\
\hspace*{\algorithmicindent} \textbf{Output:} $\widehat{\protect\conn}B$
\end{flushleft}
\begin{multicols}{2}
\footnotesize
\begin{algor}[1]
\item [{{*}}] \emph{// Find all rows in $\hat{J}(B)$}
\item [{{*}}] $s\gets0$
\item [{{*}}] $S[1,\ldots,n]\gets\text{\textbf{new} array of bools}$
\item [{forall}] rows $i$
\begin{algor}[1]
\item [{{*}}] $t\gets0$
\item [{forall}] cols $j$ s.t. $b_{ij}\neq0$\label{code:sparse_loop_1}
\begin{algor}[1]
\item [{{*}}] $t\gets t+b_{ij}$\label{code:sum}
\end{algor}
\item [{endfor}]~
\item [{if}] $t<1$\label{code:compare}
\begin{algor}[1]
\item [{{*}}] $s\gets s+1$
\item [{{*}}] $S[i]\gets\TRUE$\emph{ // $i\in\hat{J}(B)$}
\end{algor}
\item [{else}]~
\begin{algor}[1]
\item [{{*}}] $S[i]\gets\FALSE$\emph{ // $i\notin\hat{J}(B)$}
\end{algor}
\item [{endif}]~
\end{algor}
\item [{endfor}]~
\item [{{*}}]~
\item [{{*}}] \emph{// Find neighbours of each vertex (ignoring extraneous
edges as per \prettyref{rem:useless_edges})}
\item [{{*}}] $N[0,\ldots,n]\gets\text{\textbf{new }array of lists}$
\item [{forall}] rows $i$ s.t. $S[i]=\FALSE$
\begin{algor}[1]
\item [{forall}] cols $j\neq i$ s.t. $b_{ij}\neq0$\label{code:sparse_loop_2}
\begin{algor}[1]
\item [{if}] $S[j]=\TRUE$
\begin{algor}[1]
\item [{{*}}] $N[0]$.add($i$)
\end{algor}
\item [{else}]~
\begin{algor}[1]
\item [{{*}}] $N[j]$.add($i$)
\end{algor}
\item [{endif}]~
\end{algor}
\item [{endfor}]~
\end{algor}
\item [{endfor}]~
\item [{{*}}]~
\item [{{*}}] \emph{// Perform BFS starting at $0$}
\item [{{*}}] $\operatorname{result}\gets0$
\item [{{*}}] $Q\gets\text{\textbf{new }queue}$
\item [{{*}}] $Q$.enqueue($(0,0)$)
\item [{while}] $Q$ is not empty
\begin{algor}[1]
\item [{{*}}] $(j,d)\gets Q$.dequeue() 
\item [{{*}}] $\operatorname{result}\gets\max(\operatorname{result},d)$
\item [{forall}] $i$ in $N[j]$ s.t. $S[i]=\FALSE$
\begin{algor}[1]
\item [{{*}}] $s\gets s+1$
\item [{{*}}] $S[i]\gets\TRUE$
\item [{{*}}] $Q$.enqueue($(i,d+1)$)
\end{algor}
\item [{endfor}]~
\end{algor}
\item [{endwhile}]~
\item [{{*}}]~
\item [{if}] $s=n$
\begin{algor}[1]
\item [{{*}}] $\widehat{\conn}B\gets\operatorname{result}$
\end{algor}
\item [{else}]~
\begin{algor}[1]
\item [{{*}}] $\widehat{\conn}B\gets\infty$
\end{algor}
\item [{endif}]~
\end{algor}
\end{multicols}

\vspace{6pt}

\caption{\label{alg:code}Computing the index of contraction of a square substochastic
matrix}
\end{algorithm}

\prettyref{alg:code} gives precise pseudocode for steps \ref{enu:steps_1}
to \prettyref{enu:steps_4}. Without loss of generality, it is assumed
that the input matrix is square (the rectangular case is obtained
by a few trivial additions to the code). The pseudocode makes use
of the \emph{list} and \emph{queue} data structures (see, e.g., \cite[Chapter 10]{cormen2001introduction}).
The operation $L$.add($x$) appends the element $x$ to the list
$L$. The operation $Q$.enqueue($x$) adds the element $x$ to the
back of the queue $Q$. The operation $Q$.dequeue() removes and returns
the element at the front of the queue $Q$.

It is obvious that if the input to \prettyref{alg:code} is a dense
matrix of order $n$, $\Theta(n^{2})$ operations are required. Suppose
instead that we restrict our inputs to matrices $B\coloneqq(b_{ij})$
that are \emph{sparse} in the sense that $\operatorname{nnz}\coloneqq\max_{i}|\{j\colon b_{ij}\neq0\}|$,
the maximum number of nonzero entries per row, is bounded independent
of $n$ (i.e., $\operatorname{nnz}=\Theta(1)$ as $n\rightarrow\infty$).
If the matrices are stored in an appropriate format (e.g., compressed
sparse row (CSR) format, Ellpack-Itpack, etc. \cite{MR1990645}),
the loops on lines \ref{code:sparse_loop_1} and \ref{code:sparse_loop_2}
require only a constant number of iterations for each fixed $i$.
In this case, $\Theta(n)$ operations are required. An obvious generalization
of this fact is that if $\operatorname{nnz}=O(f(n))$, $O(nf(n))$
operations are required. 

\subsection{Floating point arithmetic considerations}

The loop on line \ref{code:sparse_loop_1} of \prettyref{alg:code}
computes the $i$-th row-sum of the substochastic matrix $B\coloneqq(b_{ij})$.
In the presence of floating point arithmetic, the operation $t+b_{ij}$
on line \ref{code:sum} can introduce error into calculations. In
order to analyze this error, we take the standard model of floating
point arithmetic in which floating point addition introduces error
proportional to the size of the result:
\begin{equation}
fl(x+y)=\left(x+y\right)\left(1+\delta_{x,y}\right)\text{ where }\left|\delta_{x,y}\right|\leq\epsilon.\label{eq:floating_point}
\end{equation}
$\epsilon>0$ is a machine-dependent constant (often referred to as
machine epsilon) which gives an upper bound on the relative error
due to rounding. In performing our analyses, we make the standard
assumptions that the order $n$ of the input matrix $B$ satisfies
$n\epsilon\leq1$ \cite{MR1223274} and that the entries of $B$ are
floating point numbers.

A floating point implementation of the loop on line \ref{code:sparse_loop_1}
is represented by the recurrence $S_{j}\coloneqq fl(S_{j-1}+b_{ij})$
with initial condition $S_{0}\coloneqq0$. Letting $\gamma_{k}\coloneqq k\epsilon/(1-k\epsilon)$,
this direct implementation has an error bound of \cite[Eq. (2.6)]{MR1223274}
\begin{equation}
\left|S_{n}-\sum_{j}b_{ij}\right|\leq\gamma_{\operatorname{nnz}-1}\sum_{j}b_{ij}\leq\gamma_{\operatorname{nnz}-1}.\label{eq:error}
\end{equation}
Recall that $\operatorname{nnz}$ is the maximum number of nonzero
entries per row of the matrix $B$. If the matrix $B$ is sparse (i.e.,
$\operatorname{nnz}=\Theta(1)$ as $n\rightarrow\infty$), we obtain
\[
\gamma_{\operatorname{nnz}-1}=(\operatorname{nnz}-1)\epsilon+O(\epsilon^{2})\text{ as }\epsilon\rightarrow0
\]
by the power series representation of $\gamma_{k}$. In this case,
for each $i$, the \emph{absolute} error in computing $\sum_{j}b_{ij}$
is independent of $n$.

Note that if the \emph{exact} value of $\sum_{j}b_{ij}$ is close
to $1$, the comparison $t<1$ on line \ref{code:compare} may return
either a false-positive or a false-negative. Motivated by \eqref{eq:error},
an implementation of \prettyref{alg:code} should use instead the
condition $t<1-\operatorname{tol}$ where $\operatorname{tol}$ is
a small constant strictly larger than $\gamma_{\operatorname{nnz}-1}$
to preclude the possibility that the condition evaluates to true when
the \emph{exact} value of $\sum_{j}b_{ij}$ is $1$ (for simplicity,
we assume $1-\operatorname{tol}$ has a precise floating point representation).
Then, the error bound \eqref{eq:error} and discussion above yield
the accuracy result below.
\begin{lem}
\label{lem:error}Let $B$ be a substochastic matrix with at most
$\operatorname{nnz}$ nonzero entries per row. Denoting by $(\widehat{\conn}B)_{fl}$
the quantity computed by \prettyref{alg:code} under the standard
model of floating point arithmetic \eqref{eq:floating_point} and
with condition $t<1$ replaced by $t<1-\operatorname{tol}$ where
$\operatorname{tol}>\gamma_{\operatorname{nnz}-1}$, the following
results hold:
\begin{enumerate}[label=(\roman*), ref=(\emph{\roman*})]
\item if $\widehat{\conn}B=\infty$, then $(\widehat{\conn}B)_{fl}=\widehat{\conn}B$.
\item if $\widehat{\conn}B\neq\infty$ and $\sum_{j}b_{ij}\leq1-2\operatorname{tol}$
for $i\in\hat{J}(B)$, then $(\widehat{\conn}B)_{fl}=\widehat{\conn}B$.
\end{enumerate}
\end{lem}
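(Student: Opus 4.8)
The plan is to show that the only way \prettyref{alg:code} (in its floating-point incarnation) can produce a wrong answer is by misclassifying some row with respect to membership in $\hat J(B)$, and that the two hypotheses preclude exactly the misclassifications that matter. The key observation is structural: every other step of the algorithm — the construction of the neighbour lists $N[\cdot]$, the edge reversal implicit in adding $i$ to $N[j]$, and the BFS — is performed in exact (integer/pointer) arithmetic and is therefore unaffected by \eqref{eq:floating_point}. So the entire floating-point sensitivity is concentrated in the comparison on line \ref{code:compare}, which is evaluated as $S_{\operatorname{nnz}} < 1 - \operatorname{tol}$ using the computed row-sum $S_{\operatorname{nnz}}$ satisfying the bound \eqref{eq:error}, namely $\bigl|S_{\operatorname{nnz}} - \sum_j b_{ij}\bigr| \leq \gamma_{\operatorname{nnz}-1} < \operatorname{tol}$.

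First I would record the two one-sided facts that follow from \eqref{eq:error} together with $\operatorname{tol} > \gamma_{\operatorname{nnz}-1}$. (a) If $\sum_j b_{ij} = 1$ exactly — equivalently $i \notin \hat J(B)$ by definition, since for a substochastic matrix the row-sum is either $1$ or strictly less — then $S_{\operatorname{nnz}} \geq 1 - \gamma_{\operatorname{nnz}-1} > 1 - \operatorname{tol}$, so the test evaluates to false and the algorithm correctly records $i \notin \hat J(B)$. Thus \emph{no row outside $\hat J(B)$ is ever misclassified as lying inside it}. (b) Conversely, a row with $i \in \hat J(B)$, i.e.\ $\sum_j b_{ij} < 1$, can be misclassified (the test returning false) only if $\sum_j b_{ij} > 1 - \operatorname{tol} - \gamma_{\operatorname{nnz}-1}$, and in particular only if $\sum_j b_{ij} > 1 - 2\operatorname{tol}$. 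So the computed set $\hat J_{fl}$ always satisfies $\hat J_{fl} \subseteq \hat J(B)$, with equality whenever every row of $\hat J(B)$ has row-sum at most $1 - 2\operatorname{tol}$.

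With these in hand the two clauses follow quickly. For clause (i): if $\widehat\conn B = \infty$ then, regardless of how the algorithm's BFS labels vertices, the terminal count $s$ it reaches is the number of vertices reachable from $0$ in the (reversed, exact) graph built from $\hat J_{fl}$; since $\hat J_{fl} \subseteq \hat J(B)$, this graph has no more reachable vertices than the true one, so $s \leq (\text{number of vertices the true graph reaches from }0) < n$ — the strict inequality being exactly the statement that $\widehat\conn B$ is infinite via \prettyref{cor:eventual_contractivity}-style reasoning (there is a row $i \notin \hat J(B)$ with $\hat P_i(B)$ empty, hence unreachable). Therefore the algorithm takes the \texttt{else} branch and outputs $\infty = \widehat\conn B$. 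For clause (ii): the extra hypothesis $\sum_j b_{ij} \leq 1 - 2\operatorname{tol}$ for $i \in \hat J(B)$ forces, by (b), $\hat J_{fl} = \hat J(B)$; then every subsequent step is exact and agrees line-for-line with the idealized procedure of \prettyref{enu:steps_1}–\prettyref{enu:steps_4}, whose correctness was already argued ($d(i) = \inf_{p \in \hat P_i(B)}|p|$), so $(\widehat\conn B)_{fl} = \widehat\conn B$.

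The main obstacle is clause (i), where I must argue correctness even though $\hat J_{fl}$ may be a \emph{proper} subset of $\hat J(B)$: shrinking $\hat J(B)$ in general changes $\widehat\conn B$, so I cannot simply invoke exactness of the downstream computation. What saves the proof is monotonicity — removing sinks from the reversed-graph BFS source set can only \emph{decrease} the reachable count — combined with the fact that when $\widehat\conn B = \infty$ the true reachable count is already strictly below $n$; a false value of $s < n$ is the correct output. I would make this precise by noting that the algorithm's final $s$ equals $|\hat J_{fl}| + (\text{BFS-visited vertices outside }\hat J_{fl})$, that every BFS-visited vertex lies in $\hat J(B)^{\complement}$ and has a walk in $\hat P_i(B)$ by the correctness of the idealized BFS applied to the smaller graph, and hence $s \leq |\{i : i \in \hat J(B) \text{ or } \hat P_i(B) \neq \emptyset\}| < n$. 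The remaining details — that line \ref{code:compare} is the unique floating-point-sensitive branch, and that $1 - \operatorname{tol}$ having an exact representation makes the comparison behave as claimed — are routine given the model \eqref{eq:floating_point} and the bound \eqref{eq:error}.
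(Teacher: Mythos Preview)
Your proposal is correct and follows the same line as the paper's own treatment, which in fact does not give a separate proof at all: the paper simply states that ``the error bound \eqref{eq:error} and discussion above yield the accuracy result below,'' leaving the reader to supply the details you have written out. Your argument that the floating-point model forces $\hat J_{fl}\subseteq\hat J(B)$ (since $\operatorname{tol}>\gamma_{\operatorname{nnz}-1}$ precludes false positives) and that under the hypothesis of clause (ii) one has equality, is exactly the content of the preceding paragraph of the paper; your monotonicity argument for clause (i)---that shrinking $\hat J(B)$ to $\hat J_{fl}$ can only shrink the BFS-reachable set, so an unreachable vertex remains unreachable and $s<n$ persists---is the natural way to make precise what the paper leaves implicit, and is a useful addition.
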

\begin{rem}
If $B$ is not sparse, the error \eqref{eq:error} depends on $n$.
In this case, one should substitute the naïve summation outlined by
the loop on line \ref{code:sparse_loop_1} for a more scalable algorithm,
such as Kahan's summation algorithm, whose absolute error in approximating
$\sum_{j}b_{ij}$, is $(2\epsilon+O(n\epsilon^{2}))\sum_{j}b_{ij}\leq2\epsilon+O(n\epsilon^{2})$
\cite[Eq. (3.11)]{MR1223274}, which is independent of $n$ due to
the assumption $n\epsilon\leq1$. We can obtain an analogue of \prettyref{lem:error}
under Kahan summation by choosing $\operatorname{tol}$ appropriately.
\end{rem}
Note that \prettyref{lem:error} suggests that the value of $(\widehat{\conn}B)_{fl}$
and $\widehat{\conn}B$ may disagree in certain cases. Fortunately,
as demonstrated in the next example, this occurs only if the matrix
$B$ is ``nearly nonconvergent'' (i.e., $\rho(B)=1-\epsilon_{0}$
where $\epsilon_{0}>0$ is close to zero). This error may even be
considered desirable behaviour since a nearly nonconvergent matrix
may not be convergent in the presence of floating point error.
\begin{example}
Consider the $n\times n$ matrix
\[
B_{\nu}\coloneqq\begin{pmatrix} & 1\\
 &  & 1\\
 &  &  & \ddots\\
 &  &  &  & 1\\
 &  &  &  &  & 1\\
\nicefrac{1}{n}-\nu & \nicefrac{1}{n} & \nicefrac{1}{n} & \cdots & \nicefrac{1}{n} & \nicefrac{1}{n}
\end{pmatrix}
\]
where $0<\nu\leq1/n$. Note that even though $\widehat{\conn}B_{\nu}=n-1$
independent of the value of $\nu$, $\rho(B_{\nu})\rightarrow1$ as
$\nu\rightarrow0$. When $\nu$ is very close to zero, floating point
error may cause \prettyref{alg:code} to erroneously determine that
$\hat{J}(B_{\nu})$ is empty and thereby mistakenly conclude that
the index of contraction is infinite.
\end{example}
We close this section by discussing stability. The test in \cite{MR2047092},
which determines if an arbitrary matrix is a nonsingular M-matrix,
uses a modified Gaussian elimination procedure. As such, to establish
numerical stability, the author proves that the \emph{growth factor}
(see the definition in \cite{MR3024913}) of the test is bounded by
the order of the input matrix \cite[Theorem 3.1]{MR2047092}. In our
case, the floating point error made in computing $\sum_{i}b_{ij}$
has no bearing on the error made in computing $\sum_{i^{\prime}}b_{i^{\prime}j}$
for distinct rows $i$ and $i^{\prime}$. That is, floating point
errors do not propagate from row to row. Moreover, as demonstrated
in the previous paragraphs, the error made in computing each row-sum
can be bounded by a constant (without any additional effort in the
sparse case, and with, e.g., Kahan summation in the dense case). As
such, we conclude that \prettyref{alg:code} is stable in the sense
that it does not involve numbers that grow large due to floating point
error.

\section{\label{sec:experiments}Numerical experiments}

In this section, we compare the efficiency of \emph{our test} described
at the beginning of \prettyref{sec:computation} to \emph{Peña's test}
detailed in \cite{MR2047092}. To minimize bias, we run the tests
on randomly sampled matrices (sampled according to the procedure in
\prettyref{app:sampling}).

We run the tests on matrices whose maximum number of nonzeros per
row ($\operatorname{nnz}$) are $6$, $12$, $24$, and $48$. We
employ two versions of our test: a \emph{sparse} version, in which
the matrices are stored in compressed sparse row (CSR) format, and
a \emph{dense} version, in which the matrices are two-dimensional
arrays. All tests are performed on an Intel Xeon E5440 2.83GHz CPU.
The average time to process a randomly sampled matrix is shown in
\prettyref{fig:experiments} (error bars are omitted as even the 99\%
confidence interval is too small to be visible). We mention that in
terms of accuracy, the tests produced the same results on all randomly
sampled matrices (\prettyref{fig:probability}).

\begin{figure}
\begin{centering}
\hspace{2em}\includegraphics[scale=0.75]{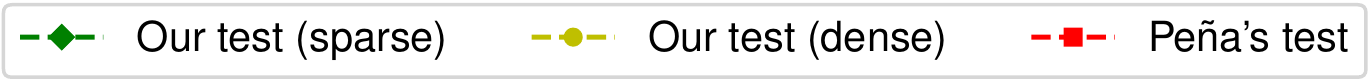}
\par\end{centering}
\begin{centering}
\subfloat[Timing results ($\log_{10}$ scale on time axis)\label{fig:experiments}]{\begin{centering}
\includegraphics[scale=0.75]{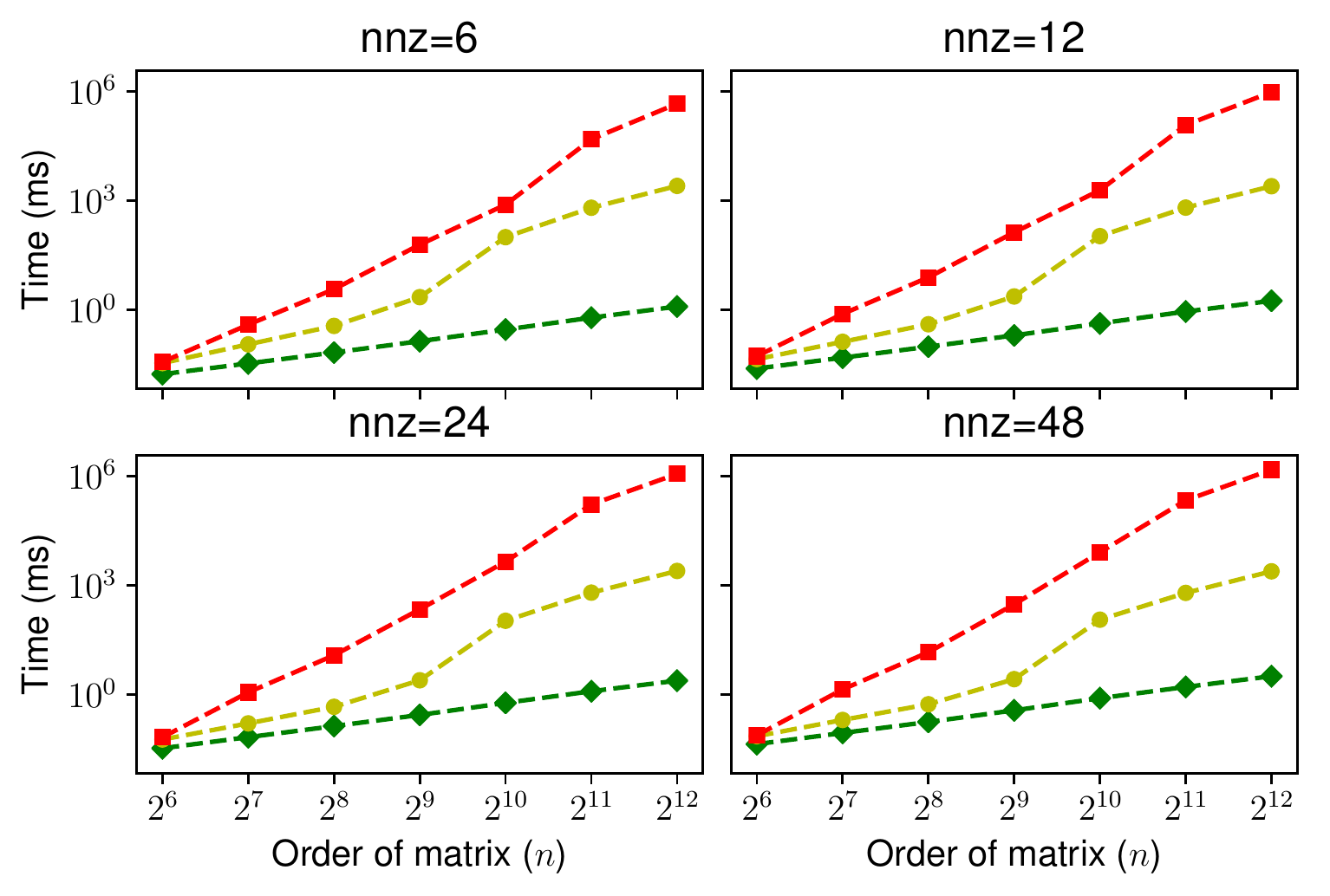}
\par\end{centering}
}
\par\end{centering}
\centering{}\subfloat[Probability that a randomly sampled matrix is a nonsingular M-matrix
(99\% confidence intervals shown)\label{fig:probability}]{\begin{centering}
\includegraphics[scale=0.75]{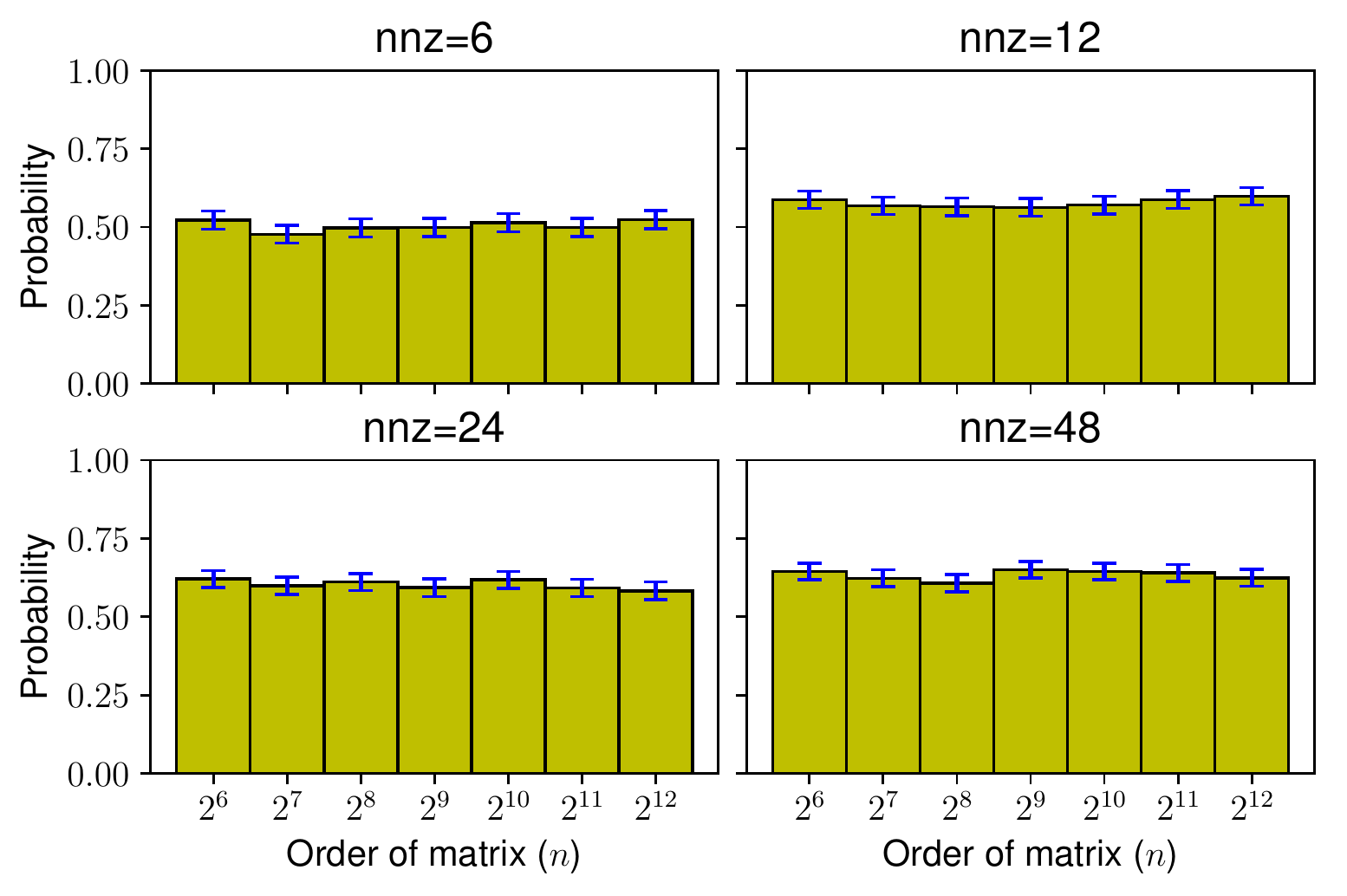}
\par\end{centering}
}
\end{figure}

\prettyref{fig:experiments} suggests that our test outperforms Peña's.
Even for the experiments involving the $1024\times1024$ sparse matrices
(small by most scientific computing standards), our sparse implementation
executes on the order of \emph{tenths of milliseconds} while Peña's
test executes on the order of \emph{seconds}.

\begin{description}
\item [{Acknowledgments}] The author thanks Edward Cheung (University of
Waterloo) for discussions on M-matrices, a careful review of this
document, and (more importantly) his unfaltering friendship.
\end{description}
\bibliographystyle{plain}
\bibliography{a_fast_and_stable_test_to_check_if_a_wdd_matrix_is_a_nonsingular_m_matrix}

\appendix

\section{\label{app:sampling}Sampling procedure}

This appendix details the procedure (employed in the numerical experiments
of \prettyref{sec:experiments}) used to randomly sample w.d.d. $\operatorname{L}_{0}$-matrices.
The procedure, for which pseudocode is given in \prettyref{alg:sampling},
works by sampling a matrix $B\coloneqq(b_{ij})_{1\leq i,j\leq n}$
from the space of substochastic matrices and returning $I-B$, which
is a w.d.d. $\operatorname{L}_{0}$-matrix by \prettyref{lem:link_between_wcdd_and_submarkov}.

\begin{algorithm}[h]
\begin{flushleft}
\footnotesize
\hspace*{\algorithmicindent} \textbf{Input:} positive integers $n$ and $\operatorname{nnz} \leq n$ \\
\hspace*{\algorithmicindent} \textbf{Output:} matrix $A$
\end{flushleft}
\begin{multicols}{2}
\footnotesize
\begin{algor}[1]
\item [{{*}}] \emph{// Initialize zero matrix}
\item [{{*}}] $B\equiv(b_{ij})\gets0$
\item [{{*}}]~
\item [{for}] $i$ from $1$ to $n$
\begin{algor}[1]
\item [{{*}}] \emph{// Determine the number of nonzero entries $m$ in
row $i$}
\item [{{*}}] {$m\sim\operatorname{Unif}\{1,\ldots,\operatorname{nnz}\}$}
\item [{{*}}]~
\item [{\emph{{*}}}] \emph{// Determine the row-sum of row $i$ (less than
one with probability $1/n$)}
\item [{{*}}] $u\sim\operatorname{Unif}[0,1]$
\item [{if}] $u<1/n$
\begin{algor}[1]
\item [{{*}}] $s\sim\operatorname{Unif}[0,1]$
\end{algor}
\item [{else}]~
\begin{algor}[1]
\item [{{*}}] $s\gets1$
\end{algor}
\item [{endif}]~
\item [{{*}}]~
\item [{\emph{{*}}}] \emph{// Determine the indices $j_{k}$ for which
$b_{ij_{k}}$ is nonzero by uniformly sampling $\{1,\ldots,n\}$ without
replacement}
\item [{{*}}] {$\mathcal{A}\gets\{1,\ldots,n\}$}
\item [{{*}}] $j_{1}\sim\operatorname{Unif}\mathcal{A}$
\item [{for}] $k$ from $2$ to $m$
\begin{algor}[1]
\item [{{*}}] {$\mathcal{A}\gets\mathcal{A}\setminus\{j_{k-1}\}$}
\item [{{*}}] $j_{k}\sim\operatorname{Unif}\mathcal{A}$
\end{algor}
\item [{endfor}]~
\item [{{*}}]~
\item [{\emph{{*}}}] \emph{// Determine the values of the nonzero entries
in row $i$}
\item [{if}] $m\geq2$
\begin{algor}[1]
\item [{{*}}] $\alpha\gets(1,\ldots,1)\in\mathbb{R}^{m}$
\item [{{*}}] $(b_{ij_{2}},\ldots,b_{ij_{m}})\sim\operatorname{Dir}\alpha$
\end{algor}
\item [{endif}]~
\item [{{*}}] $b_{ij_{1}}\gets s$
\item [{for}] $k$ from $2$ to $m$
\begin{algor}[1]
\item [{{*}}] $b_{ij_{k}}\gets sb_{ij_{k}}$
\item [{{*}}] $b_{ij_{1}}\gets b_{ij_{1}}-b_{ij_{k}}$
\end{algor}
\item [{endfor}]~
\end{algor}
\item [{endfor}]~
\item [{{*}}]~
\item [{{*}}] \emph{// Make a w.d.d. $\operatorname{L}_{0}$-matrix from
the substochastic matrix $B$}
\item [{{*}}] $A\gets I-B$ 
\end{algor}
\end{multicols}

\vspace{6pt}

\caption{\label{alg:sampling}Sampling a matrix $A$ from the space of w.d.d.
$\operatorname{L}_{0}$-matrices}
\end{algorithm}

We use $\operatorname{Unif}\Omega$ to denote a uniform distribution
on the sample space $\Omega$. For $\alpha\in\mathbb{R}^{m}$, we
use $\operatorname{Dir}\alpha$ to denote a Dirichlet distribution
of order $m$ with parameter $\alpha$. It is well-known that when
$\alpha$ is a vector whose entries are all one, $\operatorname{Dir}\alpha$
is a uniform distribution over the unit simplex in $\mathbb{R}^{m-1}$.
We use $x\sim\mathcal{D}$ to mean that $x$ is a sample drawn from
the distribution $\mathcal{D}$.

The inputs to the procedure are a positive integer $n$ corresponding
to the order of the output matrix and a positive integer $\operatorname{nnz}\leq n$
corresponding to the maximum number of nonzero entries per row.

\section{Generalizing \prettyref{thm:eventual_contractivity}}

This appendix generalizes \prettyref{thm:eventual_contractivity}.
To present the generalization, we first extend our notion of walks:
\begin{defn}
Let $(A_{n})_{n\geq1}$ be a sequence of compatible complex matrices
(i.e., the product $A_{k}A_{k+1}$ is well defined for each $k$).
\begin{enumerate}[label=(\emph{\roman*})]
\item A walk in $(A_{n})_{n}$ is a nonempty finite sequence of edges $(i_{1},i_{2})$,
$(i_{2},i_{3})$, $\ldots$, $(i_{\ell-1},i_{\ell})$ such that each
$(i_{k},i_{k+1})$ is an edge in $\graph A_{k}$. The set of all walks
in $(A_{n})_{n}$ is denoted $\walks(A_{1},A_{2},\ldots)$.
\item For $p\in\walks(A_{1},A_{2},\ldots)$, $\head p$, $\last p$, and
$|p|$ are defined in the obvious way.
\end{enumerate}
\end{defn}
Note, in particular, that if we fix a square complex matrix $A$,
we are returned to the original definition of a walk given in \prettyref{sec:matrix_families}
if we take $A_{n}\coloneqq A$ for all $n$.

It is also useful to generalize the sets $\hat{P}_{i}(\cdot)$ of
\prettyref{sec:matrix_families}. In particular, given a sequence
$(B_{n})_{n\geq1}$ of compatible substochastic matrices, let
\[
\hat{P}_{i}(B_{1},B_{2},\ldots)\coloneqq\Bigl\{ p\in\walks(B_{1},B_{2},\ldots)\colon\head p=i\text{ and }\last p\in\hat{J}(B_{|p|+1})\Bigr\}.
\]
We are now ready to give the generalization.
\begin{thm}
Let $(B_{n})_{n\geq1}$ be a sequence of compatible substochastic
matrices, $(C_{n})_{n\geq0}$ be defined by $C_{0}\coloneqq I$ and
$C_{n}\coloneqq B_{1}B_{2}\cdots B_{n}$ whenever $n$ is a positive
integer, and
\[
\widehat{\conn}(B_{1},B_{2},\ldots)\coloneqq\max\left(0,\sup_{i\notin\hat{J}(B_{1})}\left\{ \infd_{p\in\hat{P}_{i}(B_{1},B_{2},\ldots)}\left|p\right|\right\} \right).
\]
If $\alpha\coloneqq\widehat{\conn}(B_{1},B_{2},\ldots)$ is finite,
\[
1=\Vert C_{0}\Vert_{\infty}=\cdots=\Vert C_{\alpha}\Vert_{\infty}>\Vert C_{\alpha+1}\Vert_{\infty}\geq\Vert C_{\alpha+2}\Vert_{\infty}\geq\cdots
\]
Otherwise,
\[
1=\Vert C_{1}\Vert_{\infty}=\Vert C_{2}\Vert_{\infty}=\cdots
\]
\end{thm}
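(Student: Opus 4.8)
The plan is to mimic the proof of \prettyref{thm:eventual_contractivity} almost verbatim, lifting each ingredient from the single-matrix setting to the sequence setting. First I would record the monotonicity $\Vert C_{k+1}\Vert_{\infty}=\Vert C_{k}B_{k+1}\Vert_{\infty}\leq\Vert C_{k}\Vert_{\infty}\Vert B_{k+1}\Vert_{\infty}\leq\Vert C_{k}\Vert_{\infty}$, which uses only $\Vert B_{n}\Vert_{\infty}\leq1$ and gives $1\geq\Vert C_{1}\Vert_{\infty}\geq\Vert C_{2}\Vert_{\infty}\geq\cdots$ for free. This already proves the ``otherwise'' case up to showing the infimum is never attained below $1$; the real content is the strict drop at step $\alpha+1$ and the characterization of when the norm is still $1$.

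Next I would prove the analogue of \prettyref{lem:shrinking_paths}: for a positive integer $k$ and a row $i\notin\hat{J}(B_{1})$, one has $i\in\hat{J}(C_{k})$ if and only if there is a walk $p\in\hat{P}_{i}(B_{1},B_{2},\ldots)$ with $|p|<k$. The key observation is that \prettyref{lem:BC_results} is already stated for an \emph{arbitrary} compatible pair $B,C$ of substochastic matrices, so it applies to each product $C_{n-1}B_{n}$. Concretely, using \ref{enu:BC_results_4} repeatedly one shows that a walk $i_{1}\rightarrow i_{2}\rightarrow\cdots\rightarrow i_{\ell}$ in $(B_{n})_{n}$ (i.e.\ $(i_{k},i_{k+1})\in\graph B_{k}$) gives an edge $i_{1}\rightarrow i_{\ell}$ in $\graph C_{\ell-1}$, and conversely an edge in $\graph C_{\ell-1}$ decomposes into such a walk; then \ref{enu:BC_results_2} and \ref{enu:BC_results_3} translate ``$\last p\in\hat{J}(B_{|p|+1})$'' into ``$i_{1}\in\hat{J}(C_{|p|+1})$''. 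The induction on $n$ is identical in structure to the one in \prettyref{lem:shrinking_paths}; the only bookkeeping difference is that the ``$B$'' at stage $n$ is now $B_{n}$ rather than a fixed $B$, so one must be careful that the relevant factor at each step is indexed correctly ($\graph C_{n}=\graph(C_{n-1}B_{n})$).

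Once this lemma is in hand I would finish exactly as in the original proof: \prettyref{lem:contraction_characterization} says $\Vert C_{k}\Vert_{\infty}<1$ iff every row is in $\hat{J}(C_{k})$, i.e.\ iff every row $i$ either lies in $\hat{J}(B_{1})$ (equivalently $\hat{P}_{i}$ contains the ``length $0$'' situation, handled by \ref{enu:BC_results_2}) or admits a walk in $\hat{P}_{i}(B_{1},B_{2},\ldots)$ of length $<k$. Taking the supremum over $i$ of the infimal walk length and comparing with the definition of $\widehat{\conn}(B_{1},B_{2},\ldots)$ gives: $\Vert C_{k}\Vert_{\infty}<1$ iff $k>\alpha$ when $\alpha$ is finite, and $\Vert C_{k}\Vert_{\infty}=1$ for all $k\geq1$ when $\alpha=\infty$ (since then some row $i\notin\hat{J}(B_{1})$ has $\hat{P}_{i}=\emptyset$, so $i\notin\hat{J}(C_{k})$ for every $k$). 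Combined with the monotonicity, this yields both displayed chains. I expect the main obstacle to be purely notational: getting the index shifts right in the generalized \prettyref{lem:shrinking_paths} (which $B_{n}$ multiplies at which stage, and the ``$+1$'' in $\hat{J}(B_{|p|+1})$), since the mathematical substance is carried entirely by \prettyref{lem:BC_results}, which was already proved at the right level of generality.
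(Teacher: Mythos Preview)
Your proposal is correct and matches the paper's own treatment exactly: the paper states that the proof is ``nearly identical to that of \prettyref{thm:eventual_contractivity}, requiring only a simple generalization of \prettyref{lem:shrinking_paths},'' which is precisely the plan you outline. Your observation that \prettyref{lem:BC_results} is already stated for arbitrary compatible pairs is the key point enabling the generalization, and your handling of the index bookkeeping and the $\alpha=\infty$ case is accurate.
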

The proof of the above is nearly identical to that of \prettyref{thm:eventual_contractivity},
requiring only a simple generalization of \prettyref{lem:shrinking_paths}.
However, in this general case, the finitude of the index of contraction
is no longer an indicator of convergence:
\begin{example}
Let $(B_{n})_{n\ge1}$ be a sequence of compatible substochastic matrices
satisfying $\Vert B_{n}\Vert_{\infty}=1-1/2^{n}$ and $(C_{n})_{n\geq0}$
be defined as above. Clearly, each matrix $B_{n}$ is convergent,
but $\Vert C_{n}\Vert_{\infty}=\prod_{k=1}^{n}(1-1/2^{k})\nrightarrow0$
as $n\rightarrow\infty$.
\end{example}
Moreover, even if each $B_{n}$ is itself convergent, it is still
possible that the index of contraction is infinite:
\begin{example}
Let $(B_{n})_{n\geq1}$ be given by
\[
B_{n}\coloneqq\frac{1}{2}\begin{pmatrix}0 & 1+(-1)^{n}\\
1-(-1)^{n} & 0
\end{pmatrix}.
\]
Defining $(C_{n})_{n\geq0}$ as above, we find that
\[
C_{n}\coloneqq\frac{1}{2}\begin{pmatrix}0 & 0\\
1-(-1)^{n} & 1+(-1)^{n}
\end{pmatrix}\text{ for }n\geq1.
\]
That is, $\Vert C_{n}\Vert_{\infty}=1$ independent of $n$.
\end{example}
It is not hard to find interesting cases in which $\widehat{\conn}(B_{1},B_{2},\ldots)$
is finite:
\begin{example}
Let $(B_{n})_{n\geq1}$ be a sequence of square substochastic matrices
of order $n$ satisfying the following properties:
\begin{enumerate}[label=(\emph{\roman*})]
\item $B_{1}$ is convergent.
\item $\hat{J}(B_{1})=\hat{J}(B_{n})$ and $\graph B_{1}=\graph B_{n}$
for all $n$.
\end{enumerate}
Then, $\widehat{\conn}(B_{1},B_{2},\ldots)<n$.
\end{example}

\end{document}